\documentclass[12pt]{article}

\usepackage{amsmath,amssymb}
\usepackage{amsthm}
\usepackage{mathtools}
\usepackage{hyperref}
\usepackage{mathrsfs}
\usepackage{MnSymbol,wasysym}
\usepackage{authblk}

\allowdisplaybreaks

{\theoremstyle{plain}%
 \newtheorem{theorem}{Theorem}
 \newtheorem{corollary}[theorem]{Corollary}
 \newtheorem{proposition}[theorem]{Proposition}
 \newtheorem{lemma}[theorem]{Lemma}%
}
{\theoremstyle{remark}
\newtheorem{remark}[theorem]{Remark}
}
{\theoremstyle{definition}

\newtheorem{example}[theorem]{Example}
}

\DeclareMathOperator{\cat}{{\it G}}
\DeclareMathOperator{\barnes}{{\it G_B}}
\DeclareMathOperator{\EMcst}{{\it \gamma}}
\newcommand{\C}{\mathbb{C}}

\newenvironment{dedication}
 {\thispagestyle{empty}
 \vspace*{.5truecm}
 \itshape 
 \raggedleft 
 }
 {\par 
 \vspace{1truecm} 
 }

\title{Gosper's and Ramanujan's infinite products and generalizations}

\author[1]{Jean-Paul Allouche}
\author[2]{John M. Campbell}
\author[3]{Simon R. Holcombe}
\author[4]{Lubomir Markov}
\author[5]{Manon Stipulanti}

\affil[1]{CNRS, IMJ-PRG, Sorbonne, 4 Place Jussieu, 75005 Paris, France\\ \href{mailto:jean-paul.allouche@imj-prg.fr}{\tt jean-paul.allouche@imj-prg.fr}}

\affil[2]{Department of Mathematics and Statistics, Dalhousie University, Halifax, NS B3H 4R2, Canada\\\href{mailto:jh241966@dal.ca}{\tt jh241966@dal.ca}}

\affil[3]{Melbourne Business School
The University of Melbourne
200 Leicester Street, Carlton, VIC, 3053
Australia\\ \href{mailto:s.holcombe@mbs.edu}{\tt s.holcombe@mbs.edu}}

\affil[4]{Department of Mathematics and Computer Science
Barry University
11300 N.E. Second Avenue, Miami Shores, FL 33161
USA\\ \href{mailto:lmarkov@barry.edu}{\tt lmarkov@barry.edu}}

\affil[5]{Department of Mathematics, University of Liège, Allée de la Découverte 12, 4000 Liège, Belgium\\ \href{mailto:m.stipulanti@uliege.be}{\tt m.stipulanti@uliege.be}}

\begin{document}

\maketitle

\begin{dedication}
A tribute to the memory of Jonathan Sondow 1943--2020 
\end{dedication}

\newpage

\begin{abstract}
 Gosper introduced many remarkable formulas involving infinite products. One such formula provides an evaluation of the product $$
 \prod_{n \geq 1} \frac{1}{e}\left(\frac{1}{3n} + 1\right)^{3n + \frac{1}{2}}. $$ It appears that a complete proof of this evaluation has not 
 appeared in the literature. In this paper, we introduce some techniques to extend Gosper's evaluation. We prove a generalization of 
 Gosper's formula involving three free parameters, which can also be applied to evaluate arbitrary multisections of Gosper's product. 
 Then we investigate higher-order extensions involving higher-degree polynomials in $n$, relative to the  polynomials $3n$ and 
 $3n+\frac{1}{2}$ involved in Gosper's product, yielding a generalization of an infinite product evaluation due to Ramanujan and 
recently studied by Bradley--Thrush [\textit{\it Ramanujan J.} (2025)]. We also consider connections with infinite products due to 
 Kurokawa and Rovinski\u{\i}, building on the recent work of the first and fourth authors [\textit{\it Ramanujan J.} (2025)]. 
\end{abstract}

\vspace{0.1in}

\noindent {\footnotesize \emph{MSC:} 33B15, 11M06, 33B30}

\vspace{0.1in}

\noindent {\footnotesize \emph{Keywords:} polylogarithm, Riemann zeta function, polygamma function, digamma function, 
 Glaisher--Kinkelin constant, $\Gamma$-function, Barnes-$G$ function}

\section{Introduction}\label{secintro}
 The study of special functions is described by Neretin~\cite{Neretin} as ``speleological'' (actually, this is more precisely, but not only, 
 about the work of Alekseevsky):

\begin{quotation}
\noindent \emph{It seems to have been an investigation in a ``speleological'' style, which is relatively common in theory of 
 special functions.}
\end{quotation}

\noindent Our speleological journey starts with a remarkable identity due to Gosper, which can be found, e.g., on the MathWorld site 
 about infinite products\footnote{Eric W. Weisstein, Infinite Product, From MathWorld--A Wolfram Resource. 
 \url{https://mathworld.wolfram.com/InfiniteProduct.html}.}; it is also mentioned in a paper of the second author 
 \cite{Campbell2024Automatic}. This identity reads 
\begin{equation}\label{Gosper}
\prod_{n \geq 1} \frac{1}{e}\left(\frac{1}{3n} + 1\right)^{3n + \frac{1}{2}} = \sqrt{\frac{\Gamma(\frac{1}{3})}{2\pi}} \ \frac{3^{\frac{13}{24}} 
 \ e^{1+\frac{2\pi^2 - 3\psi\,'(\frac{1}{3})}{12\pi\sqrt{3}}} }{A^4}, 
\end{equation}
 where we let $\Gamma$ be the Gamma function, $\psi$ the digamma function, and $A$ the Glaisher--Kinkelin constant. For 
 background material on special functions, such as $\Gamma$ and $\psi$, see Section~\ref{definitions-lemmas}; see also, e.g., the 
 standard text by Rainville \cite[\S2]{Rainville1960}. We also refer to Section~\ref{definitions-lemmas} for special constants. We did not 
 find a complete proof of \eqref{Gosper} 
in the literature. However, a hint can be found 
online\footnote{\url{https://math.stackexchange.com/questions/4343366}).},
where it is suggested to use \emph{regularization of infinite 
products}.

\medskip

   The  evaluation in  \eqref{Gosper} above will serve as a guiding thread in this paper. We will give several approaches to prove it, as 
 well as generalizations, and point to links with similar 
infinite products. The paper is organized as follows.

\medskip

$*$ Section~\ref{definitions-lemmas} introduces the definitions of special functions and constants that will be used in the paper, as well as useful lemmas.

\medskip

$*$ In Section~\ref{section-gosper}, we give an ``easy'' proof 
of Gosper's Equation~\eqref{Gosper}. 

\medskip

$*$ Section~\ref{section-gosperlike} gives a natural generalization    of Gosper's Equation and several examples of similar infinite products.   
   This generalization involves free parameters, which has the advantage of    allowing us to evaluate arbitrary multisections of the product in   
            \eqref{Gosper}.     It appears that bisections or multisections of this product have 
not previously been considered. Specific choice of parameters gives, 
for example, the natural companions  $$ 
\prod_{n \geq 1} \frac{1}{e}
\left( \frac{1}{3n+1} + 1 \right)^{3n + \frac{3}{2}} = \frac{e^{1 + \frac{3 \psi\,'\left( 
\frac{1}{3} \right) - 2 \pi^2 }{6 \sqrt{3} \pi} }}{2^{3/2} 3^{1/4}} $$
and 
$$ 
\prod_{n \geq 1} 
\frac{1}{e} \left(\frac{1}{4 n+1}+1\right)^{4 n+\frac{3}{2}}
= \frac{e^{1+\frac{\cat}{\pi }} \sqrt{\Gamma \left(\frac{1}{4}\right)}}{2^{11/6} A^{3/2} \sqrt[4]{\pi }} 
$$
to Gosper's formula in Equation~\eqref{Gosper}.

\medskip

$*$ In Section~\ref{sec: Borwein--Dykshoorn function},  we consider how our methods and results relate to  infinite products studied by 
 Borwein and Dykshoorn~\cite{BorweinDykshoorn1993}.

\medskip

$*$ In Section~\ref{sectionSH}, we show how Sondow and Hadjicostas’
results~\cite{SondowHadjicostas2007} can give similar products. Then we deduce Gosper's formula
(see Section~\ref{Gosper-Sondow-Hadjicostas}).

\medskip

$*$ In Section~\ref{sec:higherorder}, we investigate 
higher-order extensions of Gosper's formula given by 
replacing the polynomial expressions of the forms 
$3n$ and $3n+\frac{1}{2}$ appearing in 
Equation~\eqref{Gosper} with polynomials having the same 
higher degree. This also provides a new generalization of 
a Ramanujan product identity recently considered by 
Bradley--Thrush~\cite{BradleyThrush2025}. 
Also we consider links with the theory of multiple sinus 
of Kurokawa and Rovinski\u{\i}, 
building on the recent work of the first and fourth authors 
\cite{AlloucheMarkov2026}.

\medskip

$*$ In Section~\ref{sec:Wallis-hierarchy}, we describe the
{\it Wallis hierarchy}, a family of infinite products studied in
\cite{Holcombe2026}.

\medskip

$*$ In Section~\ref{sec:Dirichlet-approach}, we propose an approach
using Dirichlet series and their truncations to prove Gosper's Equation~\eqref{Gosper}.

\section{Definitions and lemmas}\label{definitions-lemmas}

In this section, we give definitions and properties that will be 
useful later in the text. Even if many of them are well-known, we
put them here for further reference and self-containment. 

\medskip

   The \emph{Euler--Mascheroni constant} $\EMcst$ is defined by
$$
\EMcst := \lim_{n \to +\infty} 
\left(\sum_{1 \leq k \leq n} \frac{1}{k} - \log n\right) 
= \sum_{k \geq 1} \left(\frac{1}{k} 
- \log\left(1 + \frac{1}{k}\right)\right).
$$

\medskip

  If the real part of the complex number $x$ is positive,   
  then the \emph{Gamma function $\Gamma$} at $x$ is   defined as   
  the (absolutely convergent) integral
$$
\Gamma(x) := \int _{0}^{+\infty} t^{x-1}e^{-t}\,dt.
$$
Clearly, we have $\Gamma(1)=1$ and
\begin{equation}
\label{eq:Gamma+1}
 \Gamma(x+1)=x\Gamma(x).
\end{equation}
Thus, by induction, we obtain
\begin{equation}
\label{eq:Gamma natural}
\Gamma(n)=(n-1)! 
\end{equation}
for all integers $n\ge 1$.
One of the most famous values of the Gamma function is $\Gamma(\frac{1}{2})=\sqrt{\pi}$.
We also recall the reflection formula\footnote{The ``reflection formula''
is called ``La formule des compl\'ements'' in French, and ``Der
Ergänzungssatz'' or ``Die Reflexionsformel'' in German.}
\begin{equation}
\label{eq:formula for prod of gammas}
\Gamma(x) \cdot \Gamma(1-x) = \frac{\pi}{\sin(\pi x)}
\end{equation}
for $x \notin {\mathbb Z}$
(see, e.g., \cite[pp.~14--15 and p.~149]{R-Campbell1966}).

\medskip

For any complex number $s$ with $\Re(s) > 1$, we let $\zeta(s) := \sum_{n \geq 1} \frac{1}{n^s}$ be the \emph{Riemann zeta function}.
For a complex number $s$ with $\Re(s) > 1$ and $a \notin \{0, -1, -2, \ldots\}$, we let 
$\zeta(s,a) := \sum_{n \geq 0} \frac{1}{(n+a)^s}$ be the
\emph{Hurwitz zeta function}, and we also define $\zeta'(-1,x) := 
\frac{\partial}{\partial s} \zeta(s,x)_{|s =-1}$.

\medskip

The \emph{Kinkelin--Bendersky hyperfactorial function $K$}~\cite{Bendersky1933,Kinkelin1860} is the real-valued function $K(x)$ 
defined for $x\ge 0$ by the relation
\[
 \log K(x) := \frac{x^2-x}{2} - \frac{x}{2} \log 2\pi + \int_{0}^{x} \log \Gamma(y) \,dy, 
\]
see \cite[Def.~3]{SondowHadjicostas2007}.
It satisfies $K(0)=K(1)=K(2)=1$ and, for all $x>0$,
\begin{equation}
\label{eq:K+1}
K(x+1)=x^x K(x),
\end{equation}
see~\cite[p.~279]{Bendersky1933} for example.
Consequently, by induction, we have $K(n+1)=1^1 2^2 \cdots n^n$
for any integer $n\ge 0$. (Note that this is where $K$ takes its 
attribute of being ``hyperfactorial''.) The first few values of $K$ 
at integers are stored under~\cite[A002109]{Sloane}. Another 
expression of $K(x)$ is given by
\[
K(x) = e^{\zeta'(-1,x)-\zeta'(-1)},
\]
see \cite[Theorem~1]{kuro-ochiai} for example.

\medskip

The \emph{Glaisher--Kinkelin constant $A$} traces back to $19^{\text{th}}$-century works by Glaisher \cite{Glaisher1893,Glaisher1894,Glaisher1878} and Kinkelin \cite{Kinkelin1860}. 
It is defined by
\[
A := \lim_{x\to+\infty} 
\frac{K(x+1)}{ x^{\frac{x^2+x}{x}+\frac{1}{12}} e^{-\frac{x^2}{4}}}
= \lim_{n\to+\infty} \frac{1^1 2^2 \cdots n^n}{n^{\frac{n^2+n}{n}+\frac{1}{12}} e^{-\frac{n^2}{4}}},
\]
see~\cite[pp. 263–264]{Bendersky1933}  and~\cite[p.~135]{SondowHadjicostas2007}.

\bigskip

We have the following relations between the objects we have 
just introduced. For the first identity below, see, 
e.g., \cite[Eq.~22, p.~558]{SrivastavaChoi2012} and for the second, 
see, e.g., \cite[p. 281]{Bendersky1933}.

\begin{lemma}
\label{lem:relation-K-A}
The Euler--Mascheroni and Glaisher--Kinkelin constants satisfy
\[
\log A = \frac{1}{12} - \zeta'(-1) = \frac{\EMcst + \log 2\pi}{12} - \frac{\zeta'(2)}{2\pi^2}.
\]
The Kinkelin--Bendersky function $K$ and the Glaisher--Kinkelin constant $A$ are related by the following equality: for any integer $n\ge 2$, we have
\[
\prod_{j=1}^{n-1} K\left(\frac{j}{n}\right) = 
\frac{A^{\frac{n^2-1}{n}}}{n^{\frac{1}{12n}}e^{\frac{n^2-1}{12n}}}.
\]
\end{lemma}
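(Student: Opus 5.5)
Our plan has three parts: the two expressions for $\log A$, and then the product of $K(j/n)$. Throughout we use the representation $\log K(x)=\zeta'(-1,x)-\zeta'(-1)$ quoted above.

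\emph{First equality.} We start from the defining limit of $A$ and take logarithms. This gives
\[
\log A=\lim_{N\to\infty}\Bigl(\sum_{k\le N}k\log k-\bigl(\tfrac{N^2}{2}+\tfrac N2+\tfrac1{12}\bigr)\log N+\tfrac{N^2}{4}\Bigr).
\]
Next we compare this with the standard Euler--Maclaurin (or Hurwitz/Hermite) asymptotic expansion of $\sum_{k\le N}k^{-s}$, continued analytically to $s=-1$ and differentiated in $s$. This identifies the constant term of $\sum_{k\le N}k\log k$ as $\frac1{12}-\zeta'(-1)$, which yields $\log A=\frac1{12}-\zeta'(-1)$.

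\emph{Second equality.} We apply the functional equation $\zeta(s)=2^s\pi^{s-1}\sin(\pi s/2)\Gamma(1-s)\zeta(1-s)$. Taking the logarithmic derivative at $s=-1$ uses $\Gamma'(2)=1-\EMcst$, $\tan$-type terms from $\sin(\pi s/2)$ (whose derivative vanishes at $s=-1$), and $\zeta(2)=\pi^2/6$. The result is
\[
\zeta'(-1)=\tfrac1{12}-\tfrac{\EMcst+\log 2\pi}{12}+\tfrac{\zeta'(2)}{2\pi^2}.
\]
Substituting this into the first equality gives the second.

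\emph{Product formula.} We use the multiplication (distribution) formula for the Hurwitz zeta function:
\[
\sum_{j=0}^{n-1}\zeta\bigl(s,\tfrac{j+1}{n}\bigr)=n^s\zeta(s),
\]
obtained by splitting $\sum_m m^{-s}$ by residues of $m$ mod $n$. This holds for $\Re s>1$ and extends to all $s\ne1$ by continuation. Reindexing, $\sum_{j=1}^{n}\zeta(s,j/n)=n^s\zeta(s)$. Differentiating at $s=-1$ gives
\[
\sum_{j=1}^{n}\zeta'(-1,j/n)=n^{-1}\log n\,\zeta(-1)+n^{-1}\zeta'(-1).
\]
The $j=n$ term is $\zeta'(-1,1)=\zeta'(-1)$. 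Hence
\[
\sum_{j=1}^{n-1}\log K(j/n)=\sum_{j=1}^{n-1}\zeta'(-1,j/n)-(n-1)\zeta'(-1)=-\tfrac{\log n}{12n}+\bigl(\tfrac1n-n\bigr)\zeta'(-1),
\]
using $\zeta(-1)=-\frac1{12}$. Finally we insert $\zeta'(-1)=\frac1{12}-\log A$. The right-hand side becomes
\[
\tfrac{n^2-1}{n}\log A-\tfrac{n^2-1}{12n}-\tfrac{\log n}{12n},
\]
which is exactly the logarithm of the stated expression.

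The only delicate points are justifying the analytic continuation and differentiation of the distribution formula, and carrying out the asymptotic identification in the first step. Both are routine given standard properties of $\zeta(s,a)$.
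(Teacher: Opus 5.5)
Your proof is correct. The constant term of $\sum_{k\le N}k\log k$ is indeed $\frac{1}{12}-\zeta'(-1)$. The logarithmic derivative of the functional equation at $s=-1$, where the $\cot(\pi s/2)$ term vanishes and $\psi(2)=1-\EMcst$, gives exactly your value of $\zeta'(-1)$. Differentiating $\sum_{j=1}^{n}\zeta(s,j/n)=n^{s}\zeta(s)$ at $s=-1$ and removing the $j=n$ term yields precisely the logarithm of the stated product.

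The paper itself does not prove this lemma. It only cites Srivastava--Choi \cite{SrivastavaChoi2012} for the two expressions of $\log A$ and Bendersky \cite{Bendersky1933} for the product formula. What your route adds is a short self-contained derivation in which everything passes through the Hurwitz zeta function. Given the representation $K(x)=e^{\zeta'(-1,x)-\zeta'(-1)}$, which the paper quotes just before the lemma, the product formula becomes a one-line consequence of the distribution relation.

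Your first step is also the case $a=1$ of the expansion of $W_N(a)$ in Section~\ref{sec:Dirichlet-approach}. If you write $\sum_{k\le N}k\log k=-\zeta'(-1)+\zeta'(-1,N+1)$ and use the large-$q$ expansion of $\zeta'(-1,q)$ quoted there, you can cite a standard asymptotic instead of justifying the differentiation of an Euler--Maclaurin remainder in $s$.

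Finally, you silently used the correct normalization $N^{\frac{N^2+N}{2}+\frac{1}{12}}$ in the definition of $A$. The exponent $\frac{n^2+n}{n}$ displayed in the paper is a misprint.
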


\bigskip

We let $\cat$ denote the \emph{Catalan constant} defined by
\begin{equation}\label{def:Catalan-constant}
 \cat := \sum _{n\ge0} \frac{(-1)^{n}}{(2n+1)^{2}}.
\end{equation}

\bigskip

In relation with the Gamma function, we consider successive derivatives of its logarithm.
For an integer $n\ge 0$, we let $\psi^{(n)}(x)$ denote \emph{the polygamma function} defined by 
\[
\psi^{(n)}(x) := 
\frac{d^{n+1}}{dx^{n+1}} \log \Gamma(x).
\]
Note the relation $\psi^{(n)}(x) = (-1)^{n+1} \, n! \, \zeta(n+1,x)$.
The notation $\psi^{(n)}$ can be extended to negative integers $n<0$ as
follows. It is standard to let 
\[
\psi^{(-1)}(x):=
\begin{cases}
\log \Gamma(x) & \text{if $x > 0$}, \\
\overline{\log \Gamma(x)} & \text{if $x < 0$},
\end{cases}
\]
where we let $\overline{z}$ denote the complex conjugate of 
$z \in \mathbb{C}$.
With this convention, for $n \geq 2$, we may define 
$\psi^{(-n)}(x)$ so that $\psi^{(-n)}(0) := 0$
and $\frac{d}{dx} \psi^{(-n)}(x) := \psi^{(-n+1)}(x)$. In
particular, note that, for $x \geq 0$, 
\begin{equation}
\label{eq:psineg2} 
\psi^{(-2)}(x) = \int_{0}^{x} \log \Gamma(y) \, dy. 
\end{equation}
We consider two special cases of polygamma functions: the \emph{digamma} and \emph{trigamma functions} $\psi$ and $\psi\,'$ are respectively defined by
\begin{equation}\label{eq:digamma-trigamma}
\psi(x) := \frac{d}{dx} \log \Gamma(x) \ \ \text{and} \ \ 
\psi\,'(x) := \frac{d^2}{dx^2} \log \Gamma(x) 
= \sum_{n \ge 0} \frac{1}{(n+x)^2}.
\end{equation}
Note that the trigamma function satisfies $\psi\,'(x+1) = 
\psi\,'(x)-{\frac {1}{x^{2}}}$.
Finally, let us recall the 
classical formula
\begin{equation}
\label{eq:formula for psi}
\psi(1+a) = - \EMcst + \sum_{n \geq 1} \frac{a}{n(n+a)} 
\end{equation}
where $a \notin \{-1, -2, -3, \ldots\}$; see, e.g., \cite[6.3.16, p~259]{AbramowitzStegun}.

\bigskip

The {\em Barnes-$G$ function $\barnes$} is defined by
\begin{equation}\label{eq:GBarnes}
\barnes(z+1) = \frac{(2\pi)^{\frac{z}{2}}}{e^{\frac{1}{2}(z(z+1) 
+ \EMcst z^2)}}\prod_{n\ge 1} 
\frac{(1 + \frac{z}{n})^n}{e^{z - \frac{1}{2n}z^2}} 
\end{equation}
for $z\in {\mathbb C}$; see, e.g., \cite[p.~2]{Adamchik2001}.
Note that 
\begin{align}
\label{eq:GB+1}
 \barnes(z+1) = \Gamma(z)\barnes(z).
\end{align}
The values of $\barnes(x)$ for $x \in \{\frac{1}{6}, \frac{1}{4}, \frac{1}{3}, \frac{1}{2}, \frac{2}{3}, \frac{5}{6}\}$ are stored 
in~Table~\ref{tab:GB-special-values}; see, e.g., \cite{Adamchik2001}. 

\begin{table}[ht]
 \centering
 \setlength{\tabcolsep}{10pt} 
 \renewcommand{\arraystretch}{1.5} 
 \[
 \begin{array}{c|c}
 x & \log \barnes(x) \\
 \hline
 \frac{1}{6} & - \frac{\log 12}{144} + \frac{\pi}{20\sqrt{3}} - \frac{5}{6} \log \Gamma\left(\frac{1}{6}\right) - \frac{5}{6} \log A - \frac{\psi\,'(\frac{1}{6})}{40\pi \sqrt{3}} + \frac{5}{72}\\ 
 \frac{1}{4} & - \frac{\cat}{4\pi} - \frac{3}{4} \log \Gamma\left(\frac{1}{4}\right) - \frac{9}{8} \log A +\frac{3}{32}\\
 \frac{1}{3} & \frac{\log 3}{72} + \frac{\pi}{18\sqrt{3}} - \frac{2}{3} \log \Gamma\left(\frac{1}{3}\right) - \frac{4}{3} \log A - \frac{\psi\,'(\frac{1}{3})}{12\pi \sqrt{3}} + \frac{1}{9}\\
 \frac{1}{2} & \frac{\log 2}{24} - \frac{\log \pi}{4} - \frac{3 \log A}{2} + \frac{1}{8}\\
 \frac{2}{3} & \frac{\log 3}{72} + \frac{\pi}{18\sqrt{3}} - 
 \frac{1}{3} \log \Gamma\left(\frac{2}{3}\right) - \frac{4}{3} \log A - \frac{\psi\,'(\frac{2}{3})}{12\pi \sqrt{3}} + \frac{1}{9}\\
 \frac{5}{6} & - \frac{\log 12}{144} + \frac{\pi}{20\sqrt{3}} - \frac{1}{6} \log \Gamma\left(\frac{5}{6}\right) - \frac{5}{6} \log A - \frac{\psi\,'(\frac{5}{6})}{40\pi \sqrt{3}} + \frac{5}{72}
 \end{array}
 \]
 \caption{The value of $\log \barnes(x)$ for $x\in \{\frac{1}{6}, \frac{1}{4}, \frac{1}{3}, \frac{1}{2}, \frac{2}{3}, \frac{5}{6}\}$. 
 }
 \label{tab:GB-special-values}
\end{table}

The functions $K$, $\Gamma$, and $\barnes$ are related as follows.
Using the relation
\[
\int_0^z \log\Gamma(x+1)\, dx = \frac{z}{2} \log 2\pi 
- \frac{z(z+1)}{2} + z \log\Gamma(z+1) - \log \barnes(z+1)
\]
(see~\cite[Section~6.441]{GradRyzh}) and the definition of $K$, one has
\begin{align}
\label{eq:relation-K-Gamma-GB}
 K(x) = \frac{\Gamma(x)^x}{\barnes(x+1)} 
 = \frac{\Gamma(x)^{x-1}}{G_B(x)};
\end{align}
also see~\cite[p.~312]{SondowHadjicostas2007}.
This yields the values of $K(x)$ 
in terms of $\Gamma(x)$ for 
$x \in E = \{\frac{1}{6}, \frac{1}{4}, \frac{1}{3}, \frac{1}{2}, \frac{3}{4}, \frac{5}{6}\}$, as in 
 Table~\ref{tab:K-special-values}. 
 Observe that the formula for $K(\frac{1}{2})$ follows
 in a direct way from Lemma~\ref{lem:relation-K-A}.
 Similarly, the 
 evaluation for $K(\frac{1}{4})$
 may be obtained from Lemma~\ref{lem:relation-K-A} in conjunction with 
 Gosper's formula \cite[p.~71]{Gosper1997} 
\begin{equation}\label{eq:gosper-3}
\int_{0}^{\frac{1}{4}} \log t! \,dt 
= \frac{3\EMcst + 7 \log \pi - 9\log 2}{32} - \frac{9\zeta'(2)}{16\pi^2} + \frac{\cat}{4\pi} - \frac{1}{4}. 
\end{equation}
 Similarly, the evaluation 
 of $K(\frac{1}{3})$ may be obtained
 using Equation~\eqref{eq:relation-K-Gamma-GB} 
 and known values for $\barnes(\frac{1}{3})$ and 
$\barnes(\frac{2}{3})$ given in Table~\ref{tab:GB-special-values}. 
 
\begin{table}[ht]
 \centering
 \setlength{\tabcolsep}{10pt} 
 \renewcommand{\arraystretch}{1.5} 
 \[
 \begin{array}{c|cccc}
 x & \frac{1}{4} & \frac{1}{3} & \frac{1}{2} & \frac{2}{3} \\
 \hline
 K(x) & A^{\frac{9}{8}} e^{\frac{\cat}{4\pi}
 - \frac{3}{32}} & \frac{A^{\frac{4}{3}} e^{\frac{\psi\,'(\frac{1}{3})}{12\pi \sqrt{3}}-\frac{\pi}{18\sqrt{3}} - \frac{1}{9}}}{3^{\frac{1}{72}}} & \frac{A^{\frac{3}{2}}}{2^{\frac{1}{24}} e^{\frac{1}{8}}} & \frac{A^{\frac{4}{3}} e^{\frac{\psi\,'(\frac{2}{3})}{12\pi \sqrt{3}}-\frac{\pi}{18\sqrt{3}} - \frac{1}{9}}}{3^{\frac{1}{72}}}
\end{array}
 \]
 \caption{The value of $K(x)$ for $x\in \{\frac{1}{4}, \frac{1}{3}, \frac{1}{2}, \frac{2}{3}\}$. 
 }
 \label{tab:K-special-values}
\end{table}

\begin{remark}
In the proof of the previous lemma, the formula taken from \cite[p.~71]{Gosper1997} gives Equation~\eqref{eq:gosper-3} where $\cat$ is replaced with 
$\lambda = \frac{1}{8} (\psi\,'(\frac{1}{4}) - \pi^2)$.
Since $\psi\,'$ is the trigamma function defined above, 
$\lambda$ and $\cat$ are indeed the same constant.
\end{remark}

We will also use the following classical result 
(see, e.g., \cite[Section~12.13]{WW} or Theorem 1.1 and its proof
in \cite{ChamberlandStraub2013}).

\begin{lemma}\label{lem:gammapro}
Let $d$ be a positive integer and $a_j,b_j$ complex numbers for $j\in\{1,\ldots,d\}$.
If $\sum_{j=1}^d a_j = \sum_{j=1}^d b_j$ and no $b_j$'s belong to $\{-1, -2, -3, \ldots\}$, then
\[
\prod_{n \geq 1}\frac{(n+a_1) \cdots (n+a_d)}{(n+b_1) \cdots (n+b_d)}
= \frac{\Gamma(1+b_1) \cdots \Gamma(1+b_d)}{\Gamma(1+a_1) \cdots \Gamma(1+a_d)}.
\] 
\end{lemma}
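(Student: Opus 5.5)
The plan is to reduce to the partial products and identify their limit via the Weierstrass product for $\Gamma$. For $N\ge1$ set
\[
P_N := \prod_{n=1}^{N}\frac{(n+a_1)\cdots(n+a_d)}{(n+b_1)\cdots(n+b_d)} .
\]
We assume no $a_j$ lies in $\{-1,-2,\dots\}$ either. If some $a_j$ does, one factor vanishes, the product is $0$, and the right-hand side is $0$ as well, since $1/\Gamma(1+a_j)=0$.

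We use the classical Weierstrass form
\[
\frac{1}{\Gamma(1+z)} = e^{\EMcst z}\prod_{n\ge1}\Bigl(1+\frac{z}{n}\Bigr)e^{-z/n}.
\]
This follows from Euler's limit $\Gamma(1+z)=\lim_{N\to\infty} N^{z} N!/\prod_{n=1}^{N}(n+z)$ together with the definition of $\EMcst$. Writing each factor as $(n+a_j)/n$ over $(n+b_j)/n$, we get
\[
P_N=\prod_{j=1}^{d}\frac{\prod_{n\le N}(1+a_j/n)e^{-a_j/n}}{\prod_{n\le N}(1+b_j/n)e^{-b_j/n}}\cdot \exp\Bigl(\sum_{n\le N}\frac{1}{n}\sum_{j}(a_j-b_j)\Bigr).
\]
The hypothesis $\sum_j a_j=\sum_j b_j$ makes the last exponential identically $1$. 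This is the one place the hypothesis is used: without it that factor grows or decays like $N^{\sum(a_j-b_j)}$ and the product diverges or tends to $0$.

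Letting $N\to\infty$, each Weierstrass product converges, and the denominator ones are nonzero because $b_j\notin\{-1,-2,\dots\}$. Hence
\[
\lim_{N\to\infty} P_N=\prod_{j=1}^{d}\frac{e^{-\EMcst a_j}/\Gamma(1+a_j)}{e^{-\EMcst b_j}/\Gamma(1+b_j)}.
\]
The factors $e^{\EMcst(b_j-a_j)}$ multiply to $1$, again by the hypothesis, leaving $\prod_j\Gamma(1+b_j)/\Gamma(1+a_j)$.

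The only delicate point is convergence of the Weierstrass product for complex $z$. It holds because $(1+z/n)e^{-z/n}=1+O(|z|^2/n^2)$, which is absolutely summable. Alternatively, one can use the Euler limit form directly: $P_N$ equals $\prod_j \Gamma(N+1+a_j)\Gamma(1+b_j)/\bigl(\Gamma(1+a_j)\Gamma(N+1+b_j)\bigr)$ by telescoping $\Gamma(x+1)=x\Gamma(x)$. Then $\Gamma(N+1+a)/\Gamma(N+1+b)\sim N^{a-b}$, and the powers of $N$ cancel by the hypothesis. This second route, the Gamma-ratio asymptotic from Stirling, is the form I would actually write out.
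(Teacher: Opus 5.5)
Your proof is correct, including the side case where some $a_j\in\{-1,-2,\dots\}$ makes both sides vanish. The paper gives no proof of its own; it cites Whittaker--Watson \S12.13 and Theorem~1.1 of Chamberland--Straub, and those arguments are exactly your two routes: inserting the convergence factors $e^{-a_j/n}$, $e^{-b_j/n}$ (whose net effect cancels because $\sum_j a_j=\sum_j b_j$) and invoking the Weierstrass product, or telescoping to $\prod_j \Gamma(N+1+a_j)\Gamma(1+b_j)/\bigl(\Gamma(1+a_j)\Gamma(N+1+b_j)\bigr)$ and using $\Gamma(N+1+a)/\Gamma(N+1+b)\sim N^{a-b}$.
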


\begin{remark}
One can prove the following generalization of
Lemma~\ref{lem:gammapro}. Let $r$ be a positive integer. 
Let $a_1, \ldots, a_r, \theta_1, \ldots \theta_r$ be $2r$ 
complex numbers such that: 
\begin{itemize} 
\item[]{$*$} no $a_j$ belongs to 
$\{0, -1, -2, -3, \ldots\}$, 
\item[ ]{$*$} $\sum_{1 \leq j \leq r} \theta_j = 0$,
\item[ ]{$*$} $\sum_{1 \leq j \leq r} a_j \theta_j = 0$. 
\end{itemize}
Then, building on classically known techniques for the expression of 
products of rational functions with the $\Gamma$-function (see, e.g., 
Chamberland and Straub's applications of such techniques 
\cite{ChamberlandStraub2013}), it can be shown that 
\begin{align*}
\prod_{n \geq 0} 
\prod_{1 \leq j \leq r}(n + a_j)^{\theta_j}
= \left(\prod_{1 \leq j \leq r} \Gamma(a_j)^{\theta_j}\right)^{-1}.
\end{align*} 
\end{remark}

\section{A quick proof of Gosper's equation}\label{section-gosper}

Let $P=\prod_{n \geq 1} \frac{1}{e}\left(\frac{1}{3n} + 1\right)^{3n + \frac{1}{2}}$ denote Gosper's product in Equation~\eqref{Gosper}. Thus
$$
P^2 = \prod_{n \geq 1} 
\frac{1}{e^2}\left(\frac{1}{3n} + 1\right)^{6n + 1} 
= \displaystyle \left(\prod_{n \geq 1} 
\left(\frac{1}{3n} + 1\right) e^{- \frac{1}{3n}}\right) \
\left(\prod_{n \geq 1}
\left(\frac{1}{3n} + 1\right)^{6n} e^{- 2 + \frac{1}{3n}}\right).
$$
The Weierstrass product for (the inverse of) the Gamma function
(see \cite{JensenGronwall1916}, also see \cite{R-Campbell1966})
reads: for all complex numbers $z$,
$$
 \frac{1}{z e^{\EMcst z} \Gamma(z)} = \prod_{n \geq 1}\left(1+\frac{z}{n}\right)e^{-\frac{z}{n}}.
$$
   Also,   Equation~\eqref{eq:GBarnes}      can be rewritten as    $$ 
\frac{\barnes(1+z)}{(2\pi)^{\frac{z}{2}} 
\ e^{-\frac{1}{2}(z(z+1)+\EMcst z^2)}} = \prod_{n\geq 1}\left(1+\frac{z}{n}\right)^n 
e^{-z+\frac{z^2}{2n}}.  $$
Thus, multiplying these two equalities, we have
\begin{equation}\label{generalizewithBarnes}
\frac{G_{B}^6(1+z)}{(2 \pi)^{3z} e^{-3(z(z+1)+\EMcst z^2)}
z e^{\EMcst z} \Gamma(z)} = \
\prod_{n \geq 1}\left(1 + \frac{z}{n}\right)^{6n+1}
e^{- 6z + \frac{3 z^2}{n} - \frac{z}{n}}.
\end{equation}
Setting $z = \frac{1}{3}$ in Equation~\eqref{generalizewithBarnes}, 
this leads to 
$$
 \frac{3 e^{4/3} G_{B}^{6}\left(\frac{4}{3}\right) }{2 \pi \Gamma \left(\frac{1}{3}\right)} 
= P^2.
$$ 
   Now we use Relation~\eqref{eq:GB+1} and the value of   $\log \barnes(1/3)$ from Table~\ref{tab:GB-special-values} 
to obtain the value of $P^2$, and so finally Equation~\eqref{Gosper}.

\medskip

\begin{remark}
It is interesting to note that     {\tt Mathematica 13}   does not give the value of $P$, nor {\it a fortiori} 
Equation~\eqref{generalizewithBarnes}.
\end{remark}

\section{A four-parameter generalization}\label{section-gosperlike}
Remarkable results due to Gosper \cite{Gosper1997} on the evaluation 
of integrals related to Equation~\eqref{eq:psineg2} inspire the 
development of new techniques linking $\psi^{(-2)}$ 
and extensions of Gosper's product in~\eqref{Gosper}. 
 We consider infinite 
products of the type
\begin{equation}
\label{eq:Gosker-like-product-alpha}
\prod_{n \geq 1} \frac{1}{e} 
\left(\frac{1}{\alpha n} + 1 \right)^{\alpha n + \frac{1}{2}}, 
\end{equation}
where $\alpha$ is a positive real, or, more generally,
$$
\prod_{n \geq 1} e^{f(n,x)} 
\left(\frac{x}{\mu n + \nu} + 1 \right)^{\alpha n + \beta},
$$
where $\alpha, x, \mu$ are positive reals, $\beta, \nu$ are nonnegative
reals, and the function $f$ is chosen ``economically'' so that
the product converges. Actually, we will limit ourselves to
the case for such an $f$ when $\mu = \alpha$, $\nu \in\{0, 1\}$.
More precisely, in the next sections, we will evaluate the products
\begin{equation}
\label{eq:B-alpha-beta}
B_{\alpha, \beta}(x) :=
\prod_{n \geq 1} e^{-x - \frac{\beta x}{\alpha n}
+ \frac{x^2}{2 \alpha n}}
\left( \frac{x}{\alpha n} + 1 \right)^{\alpha n + \beta} 
\end{equation}
and
\begin{equation}
\label{eq:C-alpha-beta}
C_{\alpha, \beta}(x) := \prod_{n \geq 1} 
e^{-x + \frac{1-\beta}{\alpha n} x + \frac{x^2}{2 \alpha n}} 
\left( \frac{x}{\alpha n + 1} + 1 \right)^{\alpha n +\beta} 
\end{equation}
for positive reals $\alpha, x$ and a nonnegative real $\beta$.

\subsection{A first round}\label{natural}

First, motivated by Gosper's product, we study products of the form given in Equation~\eqref{eq:Gosker-like-product-alpha} 
or even more generally, the product
$$
\prod_{n \geq 1} e^{f(n)} 
\left(\frac{1}{\alpha n} + 1 \right)^{\alpha n + \beta}.
$$
where $f$ will be determined.
To that end we consider the function $B_{\alpha, \beta}(x)$ of Equation~\eqref{eq:B-alpha-beta} for positive reals $\alpha, \beta, x$.

\begin{theorem}\label{B-alpha-beta}
For the function $B_{\alpha, \beta}$ defined in Equation~\eqref{eq:B-alpha-beta}, we have
\begin{multline}\label{variant1}
\log B_{\alpha, \beta}(x) = \displaystyle
(x - \beta) \left(\log \Gamma\left(\frac{x}{\alpha} + 1 \right) 
+ \EMcst\frac{x}{\alpha}\right) \\
- \ \alpha \int_0^{x/\alpha} \log \Gamma(t+1) \ dt
- \frac{\EMcst}{2\alpha} x^2 \ \ \ \ \ \ \ \ \ \ 
\end{multline}
and
\begin{equation}\label{variant2}
B_{\alpha, \beta}(x) = 
\frac{G_{B}^{\alpha}\left(\frac{x}{\alpha}+1\right)}
{\Gamma^{\beta} \left(\frac{x}{\alpha}+1\right)}
e^{\frac{1+\EMcst}{2\alpha} x^2 - \left(\frac{\beta \EMcst}{\alpha}
+ \frac{\log2\pi -1}{2}\right) x},
\end{equation}
for positive reals $\alpha, \beta, x$.
\end{theorem}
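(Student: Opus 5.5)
The plan is to split the general factor of $B_{\alpha,\beta}(x)$ into a piece matching the Barnes-$G$ product \eqref{eq:GBarnes} and a piece matching the Weierstrass product for $\Gamma$, as in the proof of Gosper's formula in Section~\ref{section-gosper}. Set $z := x/\alpha > 0$. Then $\frac{x}{\alpha n}+1 = 1+\frac{z}{n}$, and the $n$-th factor of \eqref{eq:B-alpha-beta} factorizes as
\[
e^{-x - \frac{\beta x}{\alpha n} + \frac{x^2}{2\alpha n}}\left(1+\frac{z}{n}\right)^{\alpha n+\beta}
= \left[\left(1+\frac{z}{n}\right)^{n} e^{-z+\frac{z^2}{2n}}\right]^{\alpha}
\left[\left(1+\frac{z}{n}\right) e^{-\frac{z}{n}}\right]^{\beta}.
\]
This uses $\alpha z = x$, $\alpha z^2/(2n) = x^2/(2\alpha n)$ and $\beta z/n = \beta x/(\alpha n)$. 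All bases are positive, so real powers cause no trouble.

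Each bracketed factor is $1+O(n^{-2})$, so both products converge absolutely. The product of the $\alpha$-th and $\beta$-th powers of the limits is therefore the limit of the product, which gives
\[
B_{\alpha,\beta}(x) = \left(\prod_{n\ge1}\left(1+\frac{z}{n}\right)^{n} e^{-z+\frac{z^2}{2n}}\right)^{\alpha}
\left(\prod_{n\ge1}\left(1+\frac{z}{n}\right) e^{-\frac{z}{n}}\right)^{\beta}.
\]

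Next I evaluate the two products with results already stated. The rewritten form of \eqref{eq:GBarnes} in Section~\ref{section-gosper} gives the first product as $\barnes(1+z)(2\pi)^{-z/2}e^{\frac12(z(z+1)+\EMcst z^2)}$. The Weierstrass product together with $z\Gamma(z)=\Gamma(1+z)$ gives the second as $e^{-\EMcst z}/\Gamma(1+z)$. Taking logarithms and substituting $z=x/\alpha$:
\begin{itemize}
\item the terms $\alpha\cdot\frac{(1+\EMcst)z^2}{2}$ become $\frac{1+\EMcst}{2\alpha}x^2$;
\item the terms $\alpha\left(\frac z2 - \frac z2\log 2\pi\right)$ become $-\frac{\log 2\pi-1}{2}x$;
\item the term $-\beta\EMcst z$ becomes $-\frac{\beta\EMcst}{\alpha}x$.
\end{itemize}
This is exactly \eqref{variant2}.

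To get \eqref{variant1}, I would take the logarithm of \eqref{variant2} and replace $\log\barnes(z+1)$ using the integral relation cited from \cite[Section~6.441]{GradRyzh}:
\[
\log\barnes(z+1) = \frac z2\log 2\pi - \frac{z(z+1)}{2} + z\log\Gamma(z+1) - \int_0^z\log\Gamma(t+1)\,dt.
\]
Multiplying by $\alpha$ and substituting $z=x/\alpha$, the following cancellations occur:
\begin{itemize}
\item the $\log 2\pi$ terms cancel;
\item the linear terms $\pm x/2$ cancel;
\item of the quadratic terms, $-\frac{x^2}{2\alpha}$ cancels against $\frac{x^2}{2\alpha}$, leaving only $\frac{\EMcst x^2}{2\alpha}$.
\end{itemize}
What remains is
\[
(x-\beta)\log\Gamma\left(\tfrac{x}{\alpha}+1\right) - \alpha\int_0^{x/\alpha}\log\Gamma(t+1)\,dt + \frac{\EMcst x^2}{2\alpha} - \frac{\beta\EMcst x}{\alpha}.
\]
This agrees with \eqref{variant1}, since $(x-\beta)\EMcst\frac{x}{\alpha} - \frac{\EMcst}{2\alpha}x^2 = \frac{\EMcst x^2}{2\alpha} - \frac{\beta\EMcst x}{\alpha}$.

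The only delicate step is justifying the splitting of the infinite product, which needs the absolute convergence noted above. The rest is careful bookkeeping of the elementary exponential factors, and I expect most errors to arise there, so I would check each term's power of $\alpha$ carefully.
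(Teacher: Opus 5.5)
Your proof is correct, but it runs in the opposite direction from the paper's. The paper differentiates $\log B_{\alpha,\beta}$ term by term, using dominated convergence. It identifies the logarithmic derivative as $\frac{x-\beta}{\alpha}\bigl(\psi(\frac{x}{\alpha}+1)+\EMcst\bigr)$ via the series for $\psi(1+a)$, then integrates from $0$ using $B_{\alpha,\beta}(0)=1$. This yields \eqref{variant1} first, and \eqref{variant2} then follows from the Alexeiewsky relation \eqref{alexeiewsky-barnes}. Your factorization of the general term amounts to the identity
\[
B_{\alpha,\beta}(x)=B_{1,0}\left(\frac{x}{\alpha}\right)^{\alpha}\left(\frac{e^{-\EMcst x/\alpha}}{\Gamma(1+\frac{x}{\alpha})}\right)^{\beta},
\]
which generalizes the paper's own quick proof of Gosper's formula in Section~\ref{section-gosper}. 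So you obtain \eqref{variant2} straight from the product formulas for $G_B$ and $\Gamma$, and recover \eqref{variant1} by running the same Alexeiewsky identity backwards; your bookkeeping of the $\log 2\pi$, linear and quadratic terms is right. Your route is purely algebraic: it only needs two convergent products of positive factors, with no interchange of differentiation and summation. It also shows structurally why $G_B$ carries the exponent $\alpha$ and $\Gamma$ the exponent $\beta$. In exchange, the paper's route obtains the integral form \eqref{variant1} directly from the digamma series. That is the form tied to Gosper's evaluations of $\int\log\Gamma$, which motivate the section, and the same logarithmic-derivative method reappears later (e.g.\ for the Wallis hierarchy). Both arguments work unchanged for every real $\beta$, which the examples with $\beta\le 0$ after the theorem actually require.
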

\begin{proof}
Recalling the definition of $B_{\alpha, \beta}$ in
Equation~\eqref{eq:B-alpha-beta}, we have
\begin{equation}\label{logBalphabeta}
\log B_{\alpha, \beta}(x) =
\sum_{n \geq 1} \left( - x - \frac{\beta x}{\alpha n}
+ \frac{x^2}{2 \alpha n} 
+ (\alpha n + \beta) \log \left(\frac{x}{\alpha n} + 1 \right)\right),
\end{equation}
\bigskip
 Differentiating both sides of Equation~\eqref{logBalphabeta}, 
 and then interchanging the application of differentiation and infinite summation on the resulting right-hand side via the dominated convergence theorem,
 we find that
\begin{equation}\label{expresswdigamma} 
\frac{B_{\alpha, \beta}'(x)}{B_{\alpha, \beta}(x)} 
=
 \frac{x - \beta}{\alpha} 
 \sum_{n \geq 1} \frac{x/\alpha}{n (n + x/\alpha)}.
\end{equation}
 Using Formulas~\eqref{eq:formula for psi} 
 and \eqref{expresswdigamma} together yields 
\begin{equation}\label{BprimeoverB}
\frac{B_{\alpha, \beta}'(x)}{B_{\alpha, \beta}(x)} =
\left(\frac{x-\beta}{\alpha}\right)
\left(\psi\left(\frac{x}{\alpha}+1\right) + \EMcst\right).
\end{equation}
 Using the definition for the digamma function given in Equation~\eqref{eq:digamma-trigamma} together with 
 the evaluation $B_{\alpha, \beta}(0)=1$, we obtain
 Equation~\eqref{variant1} by integrating 
Equation~\eqref{BprimeoverB}. 

 To obtain the second relation of the theorem, we make use of the relation
\begin{equation}\label{alexeiewsky-barnes}
\int_0^z \log \Gamma(1+t) \ dt = \frac{1}{2}(\log 2\pi - 1)z
- \frac{z^2}{2} + z \log \Gamma(1+z) - \log \barnes(1+z) 
\end{equation}
 attributed to 
 Alexeiewsky
 (see~\cite[p.\ 281]{Barnes1900}
 and Remark~\ref{rem:attribute} below). 
 In this direction, Equations~\eqref{alexeiewsky-barnes} and~\eqref{variant1} together yield
\begin{multline*}
\log B_{\alpha, \beta}(x) = 
- \beta \log\Gamma\left(\frac{x}{\alpha}+1\right)
+ \frac{1+\EMcst}{2\alpha} x^2 \\
- \left(\frac{\beta \EMcst}{\alpha} 
+ \frac{\log 2\pi - 1}{2}\right) x
+ \alpha \displaystyle \log \barnes \left(\frac{x}{\alpha} + 1\right),
\end{multline*}
which gives Equation~\eqref{variant2} in the statement of the theorem. 
%
\end{proof}

\begin{remark}\label{rem:attribute}
Barnes \cite[p.\ 281]{Barnes1900} attributes 
Relation~\eqref{alexeiewsky-barnes} to Alexeiewsky 
(see the comments about the papers of Alexeiewsky 
in \cite[p.\ 265]{Barnes1900}). Note that the expression 
given by Barnes involves the term ``$z.z+1$'' that should 
be interpreted as ``$z(z+1)$'', as it becomes clear by 
taking $a=1$ in the formula given 
in \cite[p.\ 282]{Barnes1900}. For more about the work 
of Alexeiewsky, we refer the reader to \cite{Neretin}. 
\end{remark}

   By choosing particular values for $x$, $\alpha$, $\beta$ in    Equations~\eqref{variant1} and~\eqref{variant2} of  
 Theorem~\ref{B-alpha-beta},   we can obtain several relations that are interesting and relatively 
simple---including Gosper's Equation~\eqref{Gosper}. Here are some 
examples.

\begin{example}
\label{ex:ex of products}
Using Theorem~\ref{B-alpha-beta} above, and either \cite{Gosper1997} 
for relevant values of the integral of $\log\Gamma(t+1)$, or
\cite{Adamchik2001} for relevant values of $\barnes(1+z)$, we obtain the 
following identities (where we recall that $\cat$ is the Catalan constant in Equation~\eqref{def:Catalan-constant}):
\begin{equation}\label{eq:ex-of-products-1}
\prod_{n \geq 1} \frac{1}{e}\left(\frac{1}{3n} + 1\right)^{3n + \frac{1}{2}}
= \ \sqrt{\frac{\Gamma(\frac{1}{3})}{2\pi}} 
\ \frac{3^{\frac{13}{24}} \
e^{1 + \frac{2\pi^2 - 3\psi\,'(\frac{1}{3})}{12\pi\sqrt{3}}} }{A^4},
\end{equation}
\begin{equation*}\label{eq:ex-of-products-2}
 \prod_{n \geq 1} \frac{1}{e}\left(\frac{1}{2n} + 1\right)^{2n + \frac{1}{2}}
= \ \frac{2^{\frac{1}{12}} \ e}{\pi^{\frac{1}{4}} \ A^3},
\end{equation*}
\begin{equation*}\label{eq:ex-of-products-3}
 \prod_{n \geq 1} \frac{1}{e^{1 - \frac{1}{n}}}\left(\frac{1}{2n} + 1\right)^{2n - \frac{3}{2}}
= \ \frac{\pi^{\frac{3}{4}} \ e^{1+\EMcst}}{2^{\frac{23}{12}} \ A^3},
\end{equation*}
\begin{equation}\label{eq:ex-of-products-4}
 \prod_{n\ge 1} e^{\frac{1-2\beta}{8n} - 1}\ \left(\frac{1}{4n}+1\right)^{4n+\beta} 
= \ \frac{2^{2\beta - \frac{1}{2}}\ \Gamma\left(\frac{1}{4}\right)^{1-\beta}}
{\sqrt{\pi} \ A^{\frac{9}{2}}} \
 \ e^{1 - \frac{\cat}{\pi} - \frac{\EMcst (2\beta-1)}{8}}, 
\end{equation}
\begin{equation*}\label{eq:ex-of-products-5}
 \prod_{n \geq 1} \frac{1}{e} 
\left(\frac{1}{6n} + 1 \right)^{6n+\frac{1}{2}} 
= \ \sqrt{\frac{\Gamma\left(\frac{1}{6}\right)}{\pi }} \
\frac{2^{- \frac{1}{12}} 3^{\frac{11}{24}} 
e^{1+\frac{\pi }{2 \sqrt{3}}-\frac{\sqrt{3} 
\psi\,'\left(\frac{1}{3}\right)}{4 \pi }}}{A^5}.
\end{equation*}
%
%
Observe that Equation~\eqref{eq:ex-of-products-1} is Gosper's product.
In particular, Relation~\eqref{eq:ex-of-products-4} yields, e.g., 
\begin{align*}
\prod_{n \geq 1} \frac{1}{e} 
\left(\frac{1}{4n} + 1\right)^{4n + \frac{1}{2}}
&
= \ \sqrt{\frac{2 \ \Gamma\left(\frac{1}{4}\right)}{\pi \ A^9}} \ \
e^{1 - \frac{\cat}{\pi}}, 
\\
\prod_{n\ge 1} e^{\frac{1}{n} - 1} \left(\frac{1}{4n} + 1\right)^{4n-\frac{7}{2}} 
&
= \ \sqrt{\frac{\Gamma\left(\frac{1}{4}\right)^9}{2^{15}\ \pi \ A^9}} \ \
e^{1+ \EMcst - \frac{\cat}{\pi}},
\\
\prod_{n\ge 1} e^{\frac{1}{8n}-1} \left(\frac{1}{4n}+1\right)^{4n} 
&
= \ \frac{\Gamma\left(\frac{1}{4}\right)}{\sqrt{2\pi A^9}} \ \ 
e^{1 - \frac{\cat}{\pi} + \frac{\EMcst}{8}}. 
\end{align*}
\end{example} 

%

%

\begin{remark}
\label{eq: special value for 1, a half and 1}
The case $\alpha = 1$, $\beta = 1/2$, $x = 1$ of 
Theorem~\ref{B-alpha-beta} above might deserve a special mention.
It reads
$$
\prod_{n \geq 1} \frac{1}{e}
\left(\frac{1}{n}+1\right)^{n + \frac{1}{2}}
= \frac{e}{\sqrt{2 \pi}}.
$$
This equality can also be proven by a more direct argument. 
Namely, write 
\begin{align*}
\prod_{n \geq 1} \frac{1}{e}
\left(\frac{1}{n}+1\right)^{n + \frac{1}{2}}
&= \displaystyle \lim_{N \to + \infty}
\prod_{1 \leq n \leq N} \frac{1}{e}
\left(\frac{1}{n}+1\right)^{n + \frac{1}{2}} \\
&= 
\displaystyle \lim_{N \to + \infty} \prod_{1 \leq n \leq N}
\frac{1}{e} \frac{(n+1)^{n+1}}{n^n} \frac{1}{\sqrt{n(n+1)}} \\
&= \displaystyle \lim_{N \to + \infty}
\frac{(N+1)^{N + \frac{1}{2}}}{e^N\,N!}, 
\end{align*}
where the last equality is obtained by telescopic simplifications.
Now, the Stirling approximation of $N!$ yields that the last limit is $e/\sqrt{2\pi}$, as desired.
\end{remark}

\medskip

\begin{remark}
If we take $\alpha = 1$ and $\beta = 0$ in Theorem~\ref{B-alpha-beta},
we obtain
$$
B_{1, 0}(x) =
\prod_{n \geq 1} e^{-x + \frac{x^2}{2n}}
\left( \frac{x}{n} + 1 \right)^n =
\barnes\left(x+1\right)
e^{\frac{1+\EMcst}{2} x^2 - \left(\frac{\log2\pi -1}{2}\right) x},
$$
which is a property of (or one way of defining) the Barnes-$G$ function 
(see \cite[p.~268]{Barnes1900}, or, e.g., \cite[p.~136]{Finch2003}).
\end{remark}

\subsection{Multisections}
  The generalization in Section~\ref{natural} allows us to evaluate    some multisections of the product in Equation~\eqref{Gosper}: 
it appears that bisections or multisections of this product have 
not been considered in the literature. We will restrict ourselves 
to writing down a particular case, leaving the pleasure 
of discovering more general results to the reader.


%
%
%
%
%

\begin{theorem}\label{C-alpha-beta}
Let $C_{\alpha,\beta}(x)$ be the function defined in Equation~\eqref{eq:C-alpha-beta}. Then
\begin{multline*}
C_{\alpha,\beta}(x) =
 \frac{\Gamma^{\beta -1} \left(\frac{\alpha +1}{\alpha }\right)}
 {\Gamma^{ \beta -x-1} \left(\frac{x+\alpha +1}{\alpha }\right)} 
 \ \cdot \\
 \exp\left(
 \alpha \psi ^{(-2)}\left(1+\frac{1}{\alpha }\right)+\frac{\EMcst x (-2 \beta +x+2)}{2 \alpha }-\alpha \psi ^{(-2)}\left(\frac{x+\alpha
 +1}{\alpha }\right) \right)
\end{multline*}
for all positive reals $x, \alpha$, and
nonnegative real $\beta$.
\end{theorem}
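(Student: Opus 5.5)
The plan is to imitate the proof of Theorem~\ref{B-alpha-beta}: take logarithms, differentiate in $x$, recognize the digamma function, and integrate back from $x=0$, using $C_{\alpha,\beta}(0)=1$. By Equation~\eqref{eq:C-alpha-beta},
$$
\log C_{\alpha,\beta}(x)=\sum_{n\ge1}\left(-x+\frac{(1-\beta)x}{\alpha n}+\frac{x^2}{2\alpha n}+(\alpha n+\beta)\log\left(1+\frac{x}{\alpha n+1}\right)\right).
$$

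\textbf{Step 1: differentiate termwise.} The $n$-th term of the differentiated series is
$$
-1+\frac{1-\beta+x}{\alpha n}+\frac{\alpha n+\beta}{\alpha n+1+x}.
$$
Using $\frac{\alpha n+\beta}{\alpha n+1+x}-1=\frac{\beta-1-x}{\alpha n+1+x}$, this becomes
$$
(x+1-\beta)\left(\frac{1}{\alpha n}-\frac{1}{\alpha n+1+x}\right)=\frac{x+1-\beta}{\alpha}\cdot\frac{c}{n(n+c)},\qquad c:=\frac{1+x}{\alpha}.
$$
These terms are $O(1/n^2)$ uniformly for $x$ in compact subsets of $[0,\infty)$. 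This justifies interchanging differentiation and summation, either by dominated convergence as in the proof of Theorem~\ref{B-alpha-beta} or by uniform convergence. Formula~\eqref{eq:formula for psi} then gives
$$
\frac{C_{\alpha,\beta}'(x)}{C_{\alpha,\beta}(x)}=\frac{x+1-\beta}{\alpha}\left(\psi\left(\frac{x+\alpha+1}{\alpha}\right)+\EMcst\right).
$$

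\textbf{Step 2: integrate from $0$ to $x$.} The $\EMcst$ part contributes
$$
\frac{\EMcst}{\alpha}\left(\frac{x^2}{2}+(1-\beta)x\right)=\frac{\EMcst x(x+2-2\beta)}{2\alpha},
$$
which is exactly the middle term in the exponent. For the digamma part, substitute $u(t)=\frac{t+\alpha+1}{\alpha}$, so $du=dt/\alpha$ and $\psi(u)=\frac{d}{du}\log\Gamma(u)$. Then integrate by parts:
$$
\int_0^x\frac{t+1-\beta}{\alpha}\,\psi(u(t))\,dt=\Big[(t+1-\beta)\log\Gamma(u(t))\Big]_0^x-\int_0^x\log\Gamma(u(t))\,dt.
$$

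\textbf{Step 3: identify the boundary terms and the remaining integral.} The bracket equals
$$
(x+1-\beta)\log\Gamma\left(\frac{x+\alpha+1}{\alpha}\right)-(1-\beta)\log\Gamma\left(\frac{\alpha+1}{\alpha}\right).
$$
After exponentiating, these give the two Gamma factors $\Gamma^{\beta-1}\left(\frac{\alpha+1}{\alpha}\right)/\Gamma^{\beta-x-1}\left(\frac{x+\alpha+1}{\alpha}\right)$ in the statement. The remaining integral is
$$
\int_0^x\log\Gamma(u(t))\,dt=\alpha\left(\psi^{(-2)}\left(\frac{x+\alpha+1}{\alpha}\right)-\psi^{(-2)}\left(1+\frac{1}{\alpha}\right)\right),
$$
by Equation~\eqref{eq:psineg2}, since $\frac{d}{du}\psi^{(-2)}(u)=\log\Gamma(u)$ for $u>0$. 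Collecting the pieces and exponentiating yields the stated formula.

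\textbf{Main obstacle.} The only delicate point is the algebra in Step~1: one must check that the exponential factor $e^{-x+\frac{1-\beta}{\alpha n}x+\frac{x^2}{2\alpha n}}$ is precisely what makes the derivative collapse to the single-digamma form $\frac{c}{n(n+c)}$ with $c=\frac{1+x}{\alpha}$. The same cancellation also yields the $O(1/n^2)$ decay needed both for convergence of the product and for the termwise differentiation. Everything after that is a routine integration by parts, with the bookkeeping of the $\psi^{(-2)}$ convention as the only other point needing care.
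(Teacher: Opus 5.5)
Your proposal is correct and follows essentially the same route as the paper, which says only that the proof mimics that of Theorem~\ref{B-alpha-beta}: logarithmic differentiation, reduction to Equation~\eqref{eq:formula for psi} with $c=\frac{1+x}{\alpha}$, and integration from $0$ using $C_{\alpha,\beta}(0)=1$. The cancellation in Step~1, the $\EMcst$ term, the two Gamma factors from integration by parts, and the $\psi^{(-2)}$ terms all match the stated formula.
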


The proof of Theorem~\ref{C-alpha-beta} closely mimics that of Theorem~\ref{B-alpha-beta}.

\begin{remark}
Note that the equality in Theorem~\ref{C-alpha-beta}
above can be expressed in a form similar to the statement
of Theorem~\ref{B-alpha-beta} by using 
Equation~\eqref{alexeiewsky-barnes}.
\end{remark}

\begin{example}\label{multi}
By applying Theorem~\ref{C-alpha-beta} we can obtain the following 
equalities:
\begin{equation*}\label{variant2n+1}
\prod_{n \geq 1} \frac{1}{e}
\left( \frac{1}{2n+1} + 1 \right)^{2n + \frac{3}{2}} = 
\frac{e \, A^3}{2^{\frac{25}{12}} \, \pi^{\frac{1}{4}}},
\end{equation*} 
\begin{equation*}\label{variant3n+1}
\prod_{n \geq 1} \frac{1}{e}
\left( \frac{1}{3n+1} + 1 \right)^{3n + \frac{3}{2}} = 
\frac{e^{1 + \frac{3 \psi\,'\left(\frac{1}{3} \right) - 2 \pi^2 }{6 \sqrt{3} \pi}}}{2^{3/2} 3^{1/4}}, 
\end{equation*}
\begin{equation*}\label{variant4n+1} 
\prod_{n \geq 1} \frac{1}{e} \left(\frac{1}{4 n+1}+1\right)^{4 n+\frac{3}{2}}
= \frac{e^{1+\frac{\cat}{\pi }} \sqrt{\Gamma \left(\frac{1}{4}\right)}}{2^{11/6} A^{3/2} \sqrt[4]{\pi}}. 
\end{equation*}
\end{example}


\begin{remark}
 Another proof of the equalities in Example~\ref{multi} can be deduced from two formulas involving the Riemann $\zeta$ function that 
 can be found, e.g., in the book of Srivastava and Choi (\cite[Equations~(10) p.\ 270 and~(482) p.\ 311]{SrivastavaChoi2012}),
namely, for $|t| < 1$,
\begin{align*}
\sum_{k \geq 2} (-1)^k \zeta(k) \frac{t^k}{k} \,
&= \, \log \Gamma(1+t) + \EMcst t, \\
\sum_{k \geq 2} (-1)^k \zeta(k) \frac{t^{k+1}}{k+1} \,
&= \,
(1 - \log(2\pi))\, \frac{t}{2} + \frac{1+\EMcst}{2} \, t^2 
+ \log \barnes(1+t).
\end{align*} 
\end{remark}

\section{The Borwein--Dykshoorn function}
\label{sec: Borwein--Dykshoorn function}

 In the paper \cite{BorweinDykshoorn1993}, Borwein and Dykshoorn introduced the function $$ D(x) = \lim_{n \to + \infty} \prod_{k = 
 1}^{2n+1} \left(1+\frac{x}{k}\right)^{k(-1)^{k+1}} $$ as a generalization of a result of Melzak \cite{Melzak1961}
who proved that 
$$
\lim_{n \to + \infty} \prod_{k=1}^{2n+1} \left(1+\frac{2}{k}\right)^{k(-1)^{k+1}} = \frac{\pi e}{2}.
$$
Using Equation~\eqref{variant2} and the expression (see, e.g., 
\cite{Allouche2015}),
$$
D(x) = e^x\prod_{k \geq 1}\left(e^{-x} 
\left(1 + \dfrac{x}{k}\right)^k\right)^{(-1)^{k+1}},
$$ 
we could apply Theorem~\ref{B-alpha-beta} to recover the 
expression of $D(x)$ in terms of the Barnes and the Gamma functions 
given\footnote{Note that the left-hand side of Equation (46) in 
\cite{{Adamchik2005}} is not quite $D(x)$.} in 
\cite[Proposition~5, p.~284]{Adamchik2005}, but we 
limit ourselves to apply Theorem~\ref{B-alpha-beta} to obtain 
the equality $D(1) = \frac{A^6}{2^{1/6} \sqrt{\pi}}$ 
proven in \cite[p.~204]{BorweinDykshoorn1993}. Actually, 
we prove the (equivalent) formulation due to Sondow and 
Hadjicostas \cite[p.~312]{SondowHadjicostas2007}:
\begin{equation}
\label{eq:SondowHadjicostas2007}
 \prod_{n \geq 1} 
\left(\frac{e}{\left( 1 + \frac{1}{n} \right)^{n}}\right)^{(-1)^{n-1}}
= \ \ \frac{2^{\frac{1}{6}} e \sqrt{\pi}}{A^6}, 
\end{equation}
where the left-hand side is $e/D(1)$. 
Namely, write
$$
\begin{array}{lll}
\displaystyle \prod_{n \geq 1} 
\left(\frac{e}{\left(1 + \frac{1}{n} \right)^{n}}\right)^{(-1)^{n-1}}
&=& \displaystyle\prod_{n \geq 1} 
\frac{\left(1 + \frac{1}{2n}\right)^{2n}}
{\left(1 + \frac{1}{2n-1}\right)^{2n-1}} \\
&=& \displaystyle\prod_{n \geq 1} 
\left(\frac{\frac{1}{e^2}\left(1+\frac{1}{2n}\right)^{4n+1}}
{\frac{1}{e}\left(1 + \frac{1}{2n-1}\right)^{2n-\frac{1}{2}}
\frac{1}{e} \left(1+\frac{1}{2n}\right)^{2n+\frac{1}{2}}} 
\cdot R_n\right),
\end{array}
$$
where
$$
R_n := \frac{\left(1+\frac{1}{2n-1}\right)^{\frac{1}{2}}
\left(1+\frac{1}{2n}\right)^{\frac{1}{2}}}
{\left(1+\frac{1}{2n}\right)}.
$$
Carefully checking that all involved infinite products do converge
yields
$$
\begin{array}{lll}
\displaystyle \prod_{n \geq 1} 
\left(\frac{e}{\left(1 + \frac{1}{n} \right)^{n}}\right)^{(-1)^{n-1}}
&=&
\displaystyle \frac{ 
\left(\prod\limits_{n \geq 1}\frac{1}{e}\left(1+\frac{1}{2n}\right)^{2n+\frac{1}{2}}\right)^2
\prod\limits_{n \geq 1} R_n}
{\prod\limits_{n \geq 1} \frac{1}{e}\left(1 + \frac{1}{2n-1}\right)^{2n-\frac{1}{2}} 
\prod\limits_{n \geq 1} \frac{1}{e} \left(1+\frac{1}{2n}\right)^{2n+\frac{1}{2}}
}.
\end{array}
$$
Thus, using the second identity in Example~\ref{ex:ex of products} in the first term of the numerator and assembling the two factors in the denominator, we have
$$
\prod_{n \geq 1} 
\left(\frac{e}{\left(1 + \frac{1}{n} \right)^{n}}\right)^{(-1)^{n-1}}
= \
\frac{\dfrac{2^{\frac{1}{6}} e^2}{\pi^{\frac{1}{2}} A^6}\prod\limits_{n \geq 1} R_n}
{\prod\limits_{n \geq 1}\frac{1}{e}\left(1+\frac{1}{n}\right)^{n+\frac{1}{2}}}. 
$$
Remark~\ref{eq: special value for 1, a half and 1} thus gives
\[
\prod_{n \geq 1} \left(\frac{e}{\left(1 + \frac{1}{n} \right)^{n}}\right)^{(-1)^{n-1}}
= \ \frac{\dfrac{2^{\frac{1}{6}} e^2}{\pi^{\frac{1}{2}} A^6}\prod\limits_{n \geq 1} R_n}
{\frac{e}{\sqrt{2\pi}}}.
\]
Now 
$$
\prod_{n \geq 1} R_n = \sqrt{\prod_{n \geq 1} 
\frac{(2n)(2n)}{(2n-1)(2n+1)}} =
\sqrt{\Gamma\left(\frac{1}{2}\right) \Gamma\left(\frac{3}{2}\right)}
= \sqrt{\frac{\pi}{2}},
$$
where we used Lemma~\ref{lem:gammapro}, Relation~\eqref{eq:Gamma+1}, 
and the value $\Gamma(1/2) = \sqrt{\pi}$. Putting the last two 
equalities together gives Equation~\eqref{eq:SondowHadjicostas2007}.

\section{Relation to Sondow--Hadjicostas' work}\label{sectionSH}
 In this section, first we show how the work of Sondow and Hadjicostas   \cite{SondowHadjicostas2007} can be used to evaluate products of  
  a  form similar to, but different from, Gosper's Equation~\eqref{Gosper},
thus unifying some proofs that can be found in the literature. Then we 
show two approaches for proving Gosper's Equation~\eqref{Gosper} by 
using results in \cite{SondowHadjicostas2007}.

\subsection{From Sondow--Hadjicostas to Gosper-like products}
\label{GSH}

First we recall a theorem due to Sondow and Hadjicostas.
Then we give several applications.

\begin{theorem}[{\cite[Thm.~15]{SondowHadjicostas2007}}]
\label{thm:Sondow-Hadjicostas}
Let $(\theta_n)_{n\ge 1}$ be a periodic sequence in $\C$ 
and let $q$ be a positive multiple of its period.
If $\sum_{j=1}^q \theta_j = 0$, then
 \begin{equation*}
 \label{eq:Sondow-Hadjicostas}
 \sum_{n\ge 1} \theta_n \left( 1 - n\log \frac{n+1}{n} \right) 
 =
 \sum_{j=1}^{q} \theta_j \log \frac{K(\frac{j+1}{q})^q e^{-j/q}}{K(\frac{j}{q})^q \Gamma(\frac{j+1}{q})}.
 \end{equation*}
\end{theorem}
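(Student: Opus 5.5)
Split the sum by residue classes modulo $q$ and compute each class's partial sums in closed form with the functional equation \eqref{eq:K+1}. Write $n=qm+j$ with $m\ge 0$ and $1\le j\le q$. Then
\[
1-n\log\frac{n+1}{n}=1-n\log(n+1)+n\log n = 1-\log\frac{(n+1)^{n+1}}{n^n}+\log(n+1).
\]
Put $x=(qm+j)/q=m+j/q$. Then \eqref{eq:K+1} gives
\[
\frac{K(x+\frac1q)^q}{K(x)^q}\cdot\frac{1}{\Gamma(x+\frac1q)} \quad\text{with}\quad \frac{K(x+1)}{K(x)}=x^x,
\]
so $q\log K$ increments naturally in steps of $1$, not $1/q$. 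The cleaner route is therefore to form, for each residue $j$, the partial sum over $m$ from $0$ to $M-1$. I would write $N=qM$ and truncate the full series at $N$. This is legitimate because the series converges: its terms are $O(1/n)$ times the periodic sequence $\theta_n$, which has mean zero.

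The truncated sum is
\[
\sum_{n=1}^{N}\theta_n\Bigl(1-n\log(n+1)+n\log n\Bigr).
\]
Grouping by $j$ turns it into
\[
\sum_j\theta_j\Bigl(M-\sum_{m}\bigl[(qm+j)\log(qm+j+1)-(qm+j)\log(qm+j)\bigr]\Bigr).
\]
Factor out $q$ inside the logarithms: $\log(qm+j)=\log q+\log(m+j/q)$. The $\log q$ contributions carry a weight that is affine in $j$, and they should cancel against the $\Gamma$ and $K$ normalisations at the end.

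Now compare with the right-hand side. By \eqref{eq:K+1} the telescoping identities are
\[
\sum_{m=0}^{M-1}\log\frac{K(m+1+\frac jq)}{K(m+\frac jq)}=\log\frac{K(M+\frac jq)}{K(\frac jq)},
\]
and similarly with $\log\Gamma$. Each summand is $(m+\frac jq)\log(m+\frac jq)$, respectively $\log(m+\frac jq)$.

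So the plan is to express each residue-class partial sum as a combination of $\log K(M+\frac jq)-\log K(\frac jq)$, $\log K(M+\frac{j+1}q)-\log K(\frac{j+1}q)$ and $\log\Gamma(M+\frac{j+1}q)-\log\Gamma(\frac{j+1}q)$. This rests on the identity
\[
(qm+j)\log(qm+j+1)=q\Bigl(m+\tfrac{j+1}{q}\Bigr)\log\Bigl(m+\tfrac{j+1}{q}\Bigr)-\log\Bigl(m+\tfrac{j+1}{q}\Bigr)+(qm+j)\log q,
\]
which is what produces the factors $K(\frac{j+1}{q})^q$ and $\Gamma(\frac{j+1}{q})$ in the statement. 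Summing the $m$-independent pieces yields exactly the constant $\sum_j\theta_j\log\frac{K(\frac{j+1}q)^q e^{-j/q}}{K(\frac jq)^q\Gamma(\frac{j+1}q)}$. Here the $e^{-j/q}$ is bookkeeping: the constant $1$ per term, weighted by $\theta_j$, combines with the linear pieces of the $\log q$ and $M$ terms.

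The main obstacle is the remaining $M$-dependent boundary terms:
\[
\sum_j\theta_j\Bigl[q\log K(M+\tfrac{j+1}q)-q\log K(M+\tfrac jq)-\log\Gamma(M+\tfrac{j+1}q)\Bigr]+\sum_j\theta_j\,(M+\text{lin.}(j))\log q.
\]
I would show these tend to $0$ as $M\to\infty$ using the asymptotic expansions of $\log K(M+t)$ (Glaisher--Kinkelin, as in the definition of $A$) and Stirling for $\log\Gamma(M+t)$, uniformly for $t\in[0,2]$. In these expansions every term that grows with $M$ is a polynomial in $t$ of degree at most $2$ times functions of $M$. After differencing in $t$ with step $1/q$, what survives is affine in $j$. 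The condition $\sum_j\theta_j=0$ kills the constant part. For the part linear in $j$, I expect the $K$-difference and the $\Gamma$ term to cancel identically at the leading orders $M\log M$, $M$ and $\log M$; I would check this carefully, since the hypothesis only gives $\sum\theta_j=0$. One can also avoid the difficulty by noting that the left side converges, so the limit of the boundary terms exists, and its linear-in-$j$ coefficient must then vanish by the explicit expansion. Finally, the $e^{-j/q}$ and $\log q$ terms are matched by direct bookkeeping.
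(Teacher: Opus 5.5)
The paper quotes this result from Sondow--Hadjicostas without proof, so there is no in-paper argument to compare with. Your strategy (write $n=qm+j$, telescope in $m$ using $K(x+1)=x^xK(x)$ and $\Gamma(x+1)=x\Gamma(x)$, then let $M\to\infty$) is a sound way to prove it. However, your accounting is wrong at exactly the step that produces the factor $e^{-j/q}$.

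By your own identity for $(qm+j)\log(qm+j+1)$, the two contributions $(qm+j)\log q$ cancel inside every summand, so no $\log q$ terms survive. The constants $1$ contribute $M\sum_j\theta_j=0$. Hence the $m$-independent pieces give only $C_0=\sum_j\theta_j\log\frac{K(\frac{j+1}{q})^q}{K(\frac{j}{q})^q\,\Gamma(\frac{j+1}{q})}$, without any $e^{-j/q}$. Exactly, $\sum_{n\le qM}\theta_n\bigl(1-n\log\frac{n+1}{n}\bigr)=C_0-\sum_j\theta_jB_j(M)$ with $B_j(M)=q\log K(M+\frac{j+1}{q})-q\log K(M+\frac{j}{q})-\log\Gamma(M+\frac{j+1}{q})$. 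So the $e^{-j/q}$ cannot come from the constants $1$ or from $\log q$ terms; it must come from the asymptotics of the boundary terms.

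Your claim that the boundary terms tend to $0$ is false. Using $\log K(x)=\frac{x^2}{2}\log x-\frac{x^2}{4}-\frac{x}{2}\log x+\frac{1}{12}\log x+\log A+O(x^{-2})$ and Stirling, one finds
\[
B_j(M)=M+\frac{j}{q}-\frac{1}{2q}\log M+\frac{1}{2q}-\frac{1}{2}-\frac{1}{2}\log 2\pi+O\!\left(\frac{1}{M}\right).
\]
You are right that the $j$-dependence disappears at the orders $M\log M$, $M$ and $\log M$. It does not disappear at order $1$, so $\sum_j\theta_jB_j(M)\to\sum_j\theta_j\,\frac{j}{q}$, and this limit is precisely the $e^{-j/q}$ in the statement. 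For $q=2$, $\theta_n=(-1)^n$ the limit is $\frac12$, so the plan as written would produce $C_0$, an answer off by $\frac12$.

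Your fallback argument does not close this either. Convergence of the left-hand side for every mean-zero $\theta$ only forces the coefficients of the unbounded orders to be independent of $j$. It says nothing about the bounded term, whose $j$-dependent part is $j/q$, not $0$. Replacing ``the boundary terms tend to $0$'' by the displayed expansion turns your outline into a complete proof.
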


\begin{corollary}\label{cor:applications}
As in Theorem~\ref{thm:Sondow-Hadjicostas}, let
$(\theta_n)_{n\ge 1}$ be a sequence of 
reals of period $q$ such that $\sum_{j=1}^q \theta_j = 0$. 
Then, defining $\theta_0 := \theta_q$, we have
\begin{align*}
&\prod_{n\ge 1} 
\prod_{j=0}^{q-1}\left(\frac{qn+j+1}{qn+j}\right)^{\theta_j(qn+j)} \\
&= e^{-\theta_0} \prod_{n=1}^{q-1} 
\left(\frac{n}{n+1}\right)^{n \theta_n} 
\prod_{j=1}^q \left(\frac{K(\frac{j}{q})^q \Gamma(\frac{j+1}{q})}{K(\frac{j+1}{q})^q e^{-j/q}} \right)^{\theta_j} \\
&= e^{-\theta_0} \prod_{n=1}^{q-1} \left(\frac{n}{n+1}\right)^{n \theta_n} 
\prod_{j=1}^q \left(\frac{G_B(\frac{j+1}{q})^q \Gamma(\frac{j+1}{q})^{q-j}}{G_B(\frac{j}{q})^q \Gamma(\frac{j}{q})^{q-j} e^{-j/q}}\right)^{\theta_j}.
\end{align*}
\end{corollary}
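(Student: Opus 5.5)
The plan is to exponentiate the identity of Theorem~\ref{thm:Sondow-Hadjicostas} after regrouping its left-hand side into blocks of length $q$. The second equality then follows by rewriting $K$ through Equation~\eqref{eq:relation-K-Gamma-GB}.

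\emph{Convergence.} Write $S:=\sum_{n\ge1}\theta_n\bigl(1-n\log\frac{n+1}{n}\bigr)$. Since $1-n\log(1+\frac1n)=\frac{1}{2n}+O(n^{-2})$ and $\theta$ is periodic with $\sum_{j=1}^q\theta_j=0$, the series converges, though only conditionally. Its sum is therefore also the limit of the partial sums taken at $N=qM+q-1$ as $M\to\infty$. Taking $N$ along this subsequence is exactly what licenses grouping the terms $n\ge q$ into consecutive blocks $\{qm+j: 0\le j\le q-1\}$, $m\ge1$. The infinite product on the left of the corollary is understood as the limit of its partial products over $m\le M$. This justification of the regrouping, and the meaning of the double product as a limit, is the only delicate step; the rest is bookkeeping.

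\emph{Regrouping the blocks.} Using periodicity, $\theta_{qm+j}=\theta_j$ with $\theta_0=\theta_q$. The block with index $m\ge1$ contributes
\[
\sum_{j=0}^{q-1}\theta_j-\sum_{j=0}^{q-1}\theta_j(qm+j)\log\frac{qm+j+1}{qm+j}.
\]
The first sum vanishes. The leftover terms $n=1,\dots,q-1$ contribute $\sum_{n=1}^{q-1}\theta_n-\sum_{n=1}^{q-1}n\theta_n\log\frac{n+1}{n}$, and here $\sum_{n=1}^{q-1}\theta_n=-\theta_q=-\theta_0$. Denote by $\Pi$ the double product on the left of the corollary. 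Then
\[
S=-\theta_0-\sum_{n=1}^{q-1}n\theta_n\log\tfrac{n+1}{n}-\log\Pi,
\]
and hence
\[
\Pi=e^{-\theta_0}\prod_{n=1}^{q-1}\Bigl(\tfrac{n}{n+1}\Bigr)^{n\theta_n}e^{-S}.
\]
Substituting the right-hand side of Theorem~\ref{thm:Sondow-Hadjicostas} for $S$ gives
\[
e^{-S}=\prod_{j=1}^q\Bigl(\frac{K(\frac jq)^q\Gamma(\frac{j+1}q)}{K(\frac{j+1}q)^qe^{-j/q}}\Bigr)^{\theta_j},
\]
which is the first equality.

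\emph{Second equality.} Use $K(x)=\Gamma(x)^{x-1}/G_B(x)$, so that $K(x)^q=\Gamma(x)^{q(x-1)}/G_B(x)^q$. With $x=j/q$ and $x=(j+1)/q$ this yields
\[
\frac{K(\frac jq)^q\,\Gamma(\frac{j+1}q)}{K(\frac{j+1}q)^q}
=\frac{G_B(\frac{j+1}q)^q\,\Gamma(\frac{j+1}q)^{q-j-1}\,\Gamma(\frac{j+1}q)}{G_B(\frac jq)^q\,\Gamma(\frac jq)^{q-j}},
\]
and the right-hand side equals $\frac{G_B(\frac{j+1}q)^q\,\Gamma(\frac{j+1}q)^{q-j}}{G_B(\frac jq)^q\,\Gamma(\frac jq)^{q-j}}$. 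Keeping the factor $e^{-j/q}$ unchanged, this is the stated expression. All arguments $j/q$ and $(j+1)/q$ with $1\le j\le q$ are positive, so every value used is finite and nonzero.
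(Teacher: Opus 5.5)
Your proposal is correct and follows the paper's proof essentially step for step. You regroup the Sondow--Hadjicostas series into blocks of length $q$, use $\sum_{j=0}^{q-1}\theta_j=0$ both to cancel the constant terms in each block and to get $\sum_{n=1}^{q-1}\theta_n=-\theta_0$, exponentiate, and then convert $K$ via Equation~\eqref{eq:relation-K-Gamma-GB}. Your justification of the regrouping, as a subsequence of partial sums of an already convergent series, is actually cleaner than the paper's appeal to the general term tending to zero.
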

\begin{proof}
We prove the first equality of the statement.
Since the general term of the series in Theorem~\ref{thm:Sondow-Hadjicostas} tends to zero, we can group the terms in each period, and (after extending the
sequence $(\theta_n)_{n\ge 1}$ by setting $\theta_0 := \theta_q$ as in the statement), we may write
\begin{align*}
 & \sum_{n\ge 1} \theta_n \left( 1 - n\log \frac{n+1}{n} \right) \\ 
&= 
 \displaystyle\sum_{n=1}^{q-1} \theta_n \left( 1 - n\log \frac{n+1}{n} \right) + \displaystyle\sum_{n\ge 1} \sum_{j=0}^{q-1}
\theta_{qn+j} \left( 1 - (qn+j)\log \frac{qn+j+1}{qn+j} \right)
\\
&= 
 \displaystyle\sum_{n=1}^{q-1} \theta_n \left( 1 - n\log \frac{n+1}{n} \right) 
 + \displaystyle\sum_{n\ge 1} \sum_{j=0}^{q-1} \theta_j
 \left(1 - (qn+j)\log \frac{qn+j+1}{qn+j} \right),
\end{align*}
where we have used the fact that $q$ is a multiple of the period of $(\theta_n)_{n\ge 1}$. Now using the assumption that $\sum_{j=0}^{q-1} \theta_j = 0$ twice, we obtain
\begin{align*}
& \sum_{n\ge 1} \theta_n \left( 1 - n\log \frac{n+1}{n} \right) \\
& =
- \theta_0 
 - \displaystyle\sum_{n=1}^{q-1} \left(n \theta_n \log \frac{n+1}{n} \right) 
 - \displaystyle\sum_{n\ge 1} \sum_{j=0}^{q-1}
 \left(\theta_j (qn+j)\log \frac{qn+j+1}{qn+j} \right).
\end{align*}
Next, using Theorem~\ref{thm:Sondow-Hadjicostas} (while paying attention to the negative sign above) and exponentiating give
\[
\prod_{j=1}^q \left(\frac{K(\frac{j}{q})^q \Gamma(\frac{j+1}{q})}{K(\frac{j+1}{q})^q e^{-j/q}} \right)^{\theta_j}
=
e^{\theta_0} \prod_{n=1}^{q-1} \left(\frac{n+1}{n}\right)^{n \theta_n} 
\prod_{n\ge 1} \prod_{j=0}^{q-1}\left(\frac{qn+j+1}{qn+j}\right)^{\theta_j(qn+j)}.
\]
The second equality in the statement can be obtained using the relationship in Equation~\eqref{eq:relation-K-Gamma-GB}.
\end{proof}

\medskip

An example of application of this result in 
\cite{SondowHadjicostas2007} gives a proof of 
Equation~\eqref{eq:SondowHadjicostas2007} from 
the previous section.

\begin{example}[{\cite[Ex.~1]{SondowHadjicostas2007}}]
We have
 \[
 \prod_{n\ge 1} \left(\frac{e}{(1+\frac{1}{n})^n}\right)^{(-1)^{n-1}} = \frac{2^{\frac{1}{6}}e\sqrt{\pi}}{A^6}.
 \]
To show this, as explained in~\cite[Ex.~11]{SondowHadjicostas2007}, one can consider the sequence $\theta_n=(-1)^n$ and $q=2$ in Theorem~\ref{thm:Sondow-Hadjicostas}. 
\end{example}

Other special cases give results that can be found in the literature. For example, we have the following.

\begin{proposition}[{\cite[Eq.~(48) and Eq.~(49)]{Campbell2024Automatic}}] \label{pro:campbell48}
 We have  that  
 \[ 
 Q_1 := \prod_{n\ge 1} 
 \frac{(n+\frac{1}{2})^{8n+4}}{(n+1)^{4n+3} n^{4n+1}} = \frac{16\sqrt[3]{2}}{A^{12}} 
\ \ \text{and} \ \ 
Q_2 := \prod_{n\ge 1} 
\frac{(1+\frac{1}{4n})^{8n+1}}{(1+\frac{1}{4n+1})^{8n+3}} 
= \frac{2^{\frac{14}{3}}}{A^6 \sqrt{\pi} e^{\frac{4\cat}{\pi}}}.
\]
\end{proposition}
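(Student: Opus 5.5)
Both identities should follow from Corollary~\ref{cor:applications}, applied with a suitable period $q$ and weights $(\theta_j)$. Each product has to be split into a Sondow--Hadjicostas product of the shape $\prod_{n\ge1}\prod_{j}\bigl(\frac{qn+j+1}{qn+j}\bigr)^{\theta_j(qn+j)}$ and a leftover rational product. That leftover is handled by Lemma~\ref{lem:gammapro}, and the constants are then read off from Table~\ref{tab:K-special-values} together with the functional equation \eqref{eq:K+1}.

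For $Q_1$, I first note that the total degree in the numerator and denominator of each factor is the same ($8n+4$). So I can replace $n+\tfrac12$, $n$, $n+1$ by $2n+1$, $2n$, $2n+2$ without changing anything. Comparing exponents, I then write the general factor as
\[
\Bigl(\tfrac{2n+1}{2n}\Bigr)^{2\cdot 2n}\Bigl(\tfrac{2n+2}{2n+1}\Bigr)^{-2(2n+1)}\cdot \frac{(2n+1)^2}{(2n)(2n+2)}.
\]
The first two factors are the case $q=2$, $\theta_0=\theta_2=2$, $\theta_1=-2$ of Corollary~\ref{cor:applications}; here $\sum\theta_j=0$ holds. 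The last factor equals $\frac{(n+\frac12)^2}{n(n+1)}$, and Lemma~\ref{lem:gammapro} gives its product as $\Gamma(1)\Gamma(2)/\Gamma(\tfrac32)^2=4/\pi$. Both separate products converge, so the splitting is legitimate. The corollary involves $K(\tfrac12)$, $K(1)=1$ and $K(\tfrac32)=2^{-1/2}K(\tfrac12)$ (by \eqref{eq:K+1}), together with $\Gamma(1)$, $\Gamma(\tfrac32)$ and the finite prefactor $e^{-\theta_0}(\tfrac12)^{\theta_1}$. Substituting $K(\tfrac12)=A^{3/2}2^{-1/24}e^{-1/8}$, the exponentials and the powers of $\pi$ should cancel, leaving $16\sqrt[3]{2}/A^{12}$.

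For $Q_2$ I proceed in the same way with $q=4$. The general factor is
\[
\Bigl(\tfrac{4n+1}{4n}\Bigr)^{2\cdot4n}\Bigl(\tfrac{4n+2}{4n+1}\Bigr)^{-2(4n+1)}\cdot\frac{(4n+1)^2}{4n(4n+2)},
\]
so I take $\theta_0=\theta_4=2$, $\theta_1=-2$, $\theta_2=\theta_3=0$. The leftover product is $\prod_{n\ge1}\frac{(n+\frac14)^2}{n(n+\frac12)}=\Gamma(\tfrac32)/\Gamma(\tfrac54)^2$ by Lemma~\ref{lem:gammapro}. In the corollary only the indices $j=1$ and $j=4$ contribute. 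These involve $K(\tfrac14)$, $K(\tfrac12)$, $K(1)$ and $K(\tfrac54)=4^{-1/4}K(\tfrac14)$, as well as $\Gamma(\tfrac12)$ and $\Gamma(\tfrac54)=\tfrac14\Gamma(\tfrac14)$. Conveniently, $K(\tfrac34)$, which is not in the table, never appears. The finite prefactor is $e^{-2}\bigl(\tfrac12\bigr)^{-2}$. Inserting the tabulated $K(\tfrac14)=A^{9/8}e^{\cat/(4\pi)-3/32}$, the $\Gamma(\tfrac14)$ factors should cancel and the term $e^{-4\cat/\pi}$ should survive.

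The main obstacle is the bookkeeping rather than any conceptual step. I need to check carefully the indexing conventions in Corollary~\ref{cor:applications}: that the outer product starts at $n\ge1$, that $\theta_0=\theta_q$, and which finite terms $n=1,\dots,q-1$ are split off. I also need to track the many powers of $2$, $e$ and $A$. I would first verify the $Q_1$ case numerically, truncating at modest $n$, to confirm the sign conventions before doing the $Q_2$ computation.
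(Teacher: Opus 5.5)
Your proposal is correct, and the bookkeeping works out. For $Q_1$, the corollary with $q=2$ gives $4e^{-2}\cdot e^{-1}K(\tfrac12)^{-4}\cdot \pi e^{2}K(\tfrac12)^{-4}=4\pi\,2^{1/3}A^{-12}$, and multiplying by $4/\pi$ gives $16\sqrt[3]{2}/A^{12}$. For $Q_2$ you get $32\,e^{-1/2}K(\tfrac12)^{8}/\bigl(\sqrt{\pi}\,K(\tfrac14)^{16}\bigr)$, which equals the stated value.

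For $Q_2$ your argument is the paper's. The only difference is that the paper takes weights $(\theta_0,\theta_1,\theta_2,\theta_3)=(1,-1,0,0)$ and squares the resulting product, rather than doubling the weights.

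For $Q_1$ your route is genuinely different. The paper works modulo $4$: it multiplies $Q_1$ by $\prod_{n\ge1}\frac{n(n+\frac34)}{(n+\frac14)(n+\frac12)}$ and applies the corollary with $q=4$ and weights $(1,1,-1,-1)$. As a result $K(\tfrac14)$, $K(\tfrac34)$, $K(\tfrac54)$, $\Gamma(\tfrac14)$, $\Gamma(\tfrac34)$ and the reflection formula all enter, and then cancel. Your period-$2$ split needs only $K(\tfrac12)$ and the Wallis-type product $\Gamma(\tfrac32)^{-2}=4/\pi$, so it is shorter.

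It also shows that $Q_1$ is really a period-$2$ object. Your weights $\theta_n=2(-1)^n$ are a multiple of the alternating sequence in the Sondow--Hadjicostas example. So your computation amounts to $Q_1=\frac{16}{\pi e^{2}}D^{2}$, where $D=\prod_{n\ge1}\bigl(e/(1+\frac1n)^n\bigr)^{(-1)^{n-1}}=\frac{2^{1/6}e\sqrt{\pi}}{A^{6}}$ is the alternating product.

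What the paper's mod-$4$ setup buys is uniformity: both $Q_1$ and $Q_2$ are written in terms of the residues $4n+j$ and handled with the same $q$.
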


\begin{proof}
We first treat $Q_1$.
We have  that 
\[
Q_1 \cdot \prod_{n\ge 1}
\frac{n(n+\frac{3}{4})}{(n+\frac{1}{4})(n+\frac{1}{2})} 
= \prod_{n\ge 1} 
\frac{(4n+2)^{8n+4}}{(4n+4)^{4n+3}(4n)^{4n+1}}
\cdot 
\prod_{n\ge 1}
\frac{4n(4n+3)}{(4n+1)(4n+2)}
\]
\begin{equation}\label{Ptimes}
= \prod_{n\ge 1} \left( \frac{4n+1}{4n} \right)^{4n} \left( \frac{4n+2}{4n+1} \right)^{4n+1} \left( \frac{4n+2}{4n+3} \right)^{4n+2} \left( \frac{4n+3}{4n+4} \right)^{4n+3}.
\end{equation}
In Corollary~\ref{cor:applications}, take $q=4$ and the sequence 
$(\theta_n)_{n\ge 1}$ defined by $\theta_{4n}=1$ for all $n\ge 1$, 
and $\theta_{4n+1}=1$, $\theta_{4n+2}=-1$, and $\theta_{4n+3}=-1$ 
for all $n\ge 0$, and also set $\theta_0=\theta_4=1$. 
Then Corollary~\ref{cor:applications} gives 
\begin{align*}
& \hskip -.5truecm \prod_{n\ge 1} \left(\frac{4n+1}{4n}\right)^{4n} \left(\frac{4n+2}{4n+1}\right)^{4n+1} 
\left(\frac{4n+2}{4n+3}\right)^{4n+2} 
\left(\frac{4n+3}{4n+4} \right)^{4n+3} \\
&=
\left(\frac{8}{3e}\right)
\left(\frac{K(\frac{1}{4})^4 \Gamma(\frac{1}{2})}{K(\frac{1}{2})^4 
e^{-1/4}} \cdot 
\frac{K(\frac{3}{4})^4 e^{-1/2}}{K(\frac{1}{2})^4 \Gamma(\frac{3}{4})} \cdot 
\frac{K(1)^4 e^{-3/4}}{K(\frac{3}{4})^4 \Gamma(1)} \cdot \frac{K(1)^4 \Gamma(\frac{5}{4})} {K(\frac{5}{4})^4 e^{-1}}\right) 
\\ 
&= 
\left(\frac{8}{3e}\right)
\left(\frac{K(\frac{1}{4})^4 K(1)^8 \Gamma(\frac{1}{2}) \Gamma(\frac{5}{4})}{K(\frac{1}{2})^8 K(\frac{5}{4})^4 \Gamma(\frac{3}{4}) \Gamma(1)}\right) = 
\frac{8\Gamma(\frac{1}{4})^2 2^{1/3}}{3\sqrt{{2\pi}}A^{12}},
\end{align*}
where, for the last equality, we have used the following values
\begin{align}
\label{eq:special values for gamma and K}
&\Gamma(1)= K(1) = 1, 
\ \ \Gamma\left(\frac{1}{2}\right) = \sqrt{\pi}, 
\ \ \Gamma\left(\frac{5}{4}\right) = \frac{1}{4}\Gamma\left(\frac{1}{4}\right), 
\ \ \ \ K\left(\frac{1}{2}\right) = \frac{A^{\frac{3}{2}}}{2^{\frac{1}{24}} e^{\frac{1}{8}}}, \nonumber \\ 
&K\left(\frac{5} {4}\right) = \frac{1}{4^{1/4}} K\left(\frac{1}{4}\right),
\ \ \Gamma\left(\frac{1}{4}\right)\Gamma\left(\frac{3}{4}\right) = \frac{\pi}{\sin(\pi/4)} = \pi \sqrt{2}.
\end{align}
(see Equations~\eqref{eq:Gamma+1},~\eqref{eq:formula for prod of gammas} and~\eqref{eq:K+1}).
Hence Equation~\eqref{Ptimes} gives
\[
Q_1 \cdot \prod_{n\ge 1}
\frac{n(n+\frac{3}{4})}{(n+\frac{1}{4})(n+\frac{1}{2})} 
= \frac{8\Gamma(\frac{1}{4})^2 2^{1/3}}{3\sqrt{{2\pi}}A^{12}}.
\]
However, Lemma~\ref{lem:gammapro} yields
\[
\prod_{n\ge 1}
\frac{n(n+\frac{3}{4})}{(n+\frac{1}{4})(n+\frac{1}{2})} 
=\frac{\Gamma\left(1+\frac{1}{4}\right) \Gamma\left(1+\frac{1}{2}\right)}{\Gamma(1) \Gamma\left(1+\frac{3}{4}\right)}
= \frac{\Gamma\left(\frac{1}{4}\right)\Gamma\left(\frac{1}{2}\right)}{6\ \Gamma\left(\frac{3}{4}\right)} 
= \frac{\Gamma(\frac{1}{4})^2}{6\sqrt{2\pi}},
\]
where we again used Equation~\eqref{eq:Gamma+1} and the special values highlighted in Equation~\eqref{eq:special values for gamma and K}.
Hence
\[
Q_1 = \frac{8\Gamma(\frac{1}{4})^2 2^{1/3}}{3\sqrt{{2\pi}}A^{12}} \cdot 
\frac{6\sqrt{2\pi}}{\Gamma(\frac{1}{4})^2}
= \frac{16\sqrt[3]{2}}{A^{12}}, 
\]
as desired.

\bigskip

Similarly, we write 
\begin{align*}
Q_2 \cdot \prod_{n\ge 1}
\frac{n(n+\frac{1}{2})}{(n+\frac{1}{4})^2}
&=\prod_{n\ge 1} 
\frac{(4n+1)^{16n+4}}{(4n)^{8n+1}(4n+2)^{8n+3}}
\cdot \prod_{n\ge 1}
\frac{(4n)(4n+2)}{(4n+1)^2} \\
& \!\!\!\!\! = \prod_{n\ge 1} 
\frac{(4n+1)^{16n+2}}{(4n)^{8n}(4n+2)^{8n+2}} 
= \left(\prod_{n\ge 1}
\frac{(4n+1)^{8n+1}}{(4n)^{4n}(4n+2)^{4n+1}}\right)^2 .
\end{align*}
In Corollary~\ref{cor:applications}, take $q=4$ and the sequence 
$(\theta_n)_{n\ge 1}$ defined by $\theta_{4n}=1$ for all $n\ge 1$, 
and $\theta_{4n+1}=-1$, $\theta_{4n+2}=0$, and $\theta_{4n+3}=0$ 
for all $n\ge 0$, and set $\theta_0=\theta_4=1$.
Then Corollary~\ref{cor:applications} gives 
\begin{align*}
\prod_{n\ge 1}
\frac{(4n+1)^{8n+1}}{(4n)^{4n}(4n+2)^{4n+1}} &=
\prod_{n\ge 1} \left(\frac{4n+1}{4n} \right)^{4n} 
\left( \frac{4n+1}{4n+2} \right)^{4n+1} \\
&= \frac{2}{e} \left(\frac{K(\frac{1}{4})^4 \Gamma(\frac{1}{2})}{K(\frac{1}{2})^4 e^{-\frac{1}{4}}}\right)^{-1}
\left(\frac{K(1)^4 \Gamma(\frac{5}{4})} {K(\frac{5}{4})^4 e^{-1}}\right) \\
&= \frac{2^{5/6} A^6 \Gamma(\frac{1}{4})}{e^{\frac{3}{4}} \sqrt{\pi}K(\frac{1}{4})^8},
\end{align*}
where we used the special values in Equation~\eqref{eq:special values for gamma and K} above.
Thus
\[
Q_2 \cdot \prod_{n\ge 1}
\frac{n(n+\frac{1}{2})}{(n+\frac{1}{4})^2} =
\left(\frac{2^{5/6} A^6 \Gamma(\frac{1}{4})}{e^{\frac{3}{4}}
\sqrt{\pi}K(\frac{1}{4})^8}\right)^2
= \frac{2^{5/3} A^{12} \Gamma(\frac{1}{4})^2}{e^{\frac{3}{2}}\pi 
K(\frac{1}{4})^{16}}.
\]
Using Lemma~\ref{lem:gammapro}, we have
\[
\prod_{n\ge 1}
\frac{n(n+\frac{1}{2})}{(n+\frac{1}{4})^2} 
= \frac{\Gamma(\frac{5}{4})^2}{\Gamma(1)\Gamma(\frac{3}{2})}
= \frac{\frac{1}{16}\Gamma(\frac{1}{4})^2}{\frac{1}{2}\Gamma(\frac{1}{2})}
= \frac{\Gamma(\frac{1}{4})^2}{2^3\Gamma(\frac{1}{2})}
= \frac{\Gamma(\frac{1}{4})^2}{8 \sqrt{\pi}},
\]
where we used the special value $\Gamma(1/2)=\sqrt{\pi}$.
Hence, using the value of $K(1/4)$ given 
in Table \ref{tab:K-special-values}, we obtain
\[
Q_2 = \left(\frac{\Gamma(\frac{1}{4})^2}{8 \sqrt{\pi}}\right)^{-1} 
\cdot \frac{2^{5/3} A^{12} \Gamma(\frac{1}{4})^2}{e^{\frac{3}{2}}\pi 
K(\frac{1}{4})^{16}}
=\frac{2^{\frac{14}{3}}A^{12}}{e^{\frac{3}{2}}\sqrt{\pi} A^{18} e^{4\frac{\cat}{\pi}-\frac{3}{2}}.} 
= \frac{2^{\frac{14}{3}}}{A^6 \sqrt{\pi} e^{\frac{4\cat}{\pi}}},
\]
as desired.
\end{proof}

\bigskip

Now let us consider two examples inspired by Gosper's 
Equation~\eqref{Gosper} with four and with three factors instead of two.
Note that we will use Corollary~\ref{cor:applications}---hence the 
results of \cite{SondowHadjicostas2007}---for these two examples, but
they can both be immediately and directly evaluated by {\tt Mathematica}.

\begin{proposition}\label{almost-Gosper}
We have
\[
R := \prod_{n\ge 1} \frac{n^{6n} \, (n+1)^{3n+2}}{(n+\frac{2}{3})\, (n+\frac{1}{3})^{9n+1}}
= \frac{4 \pi \sqrt{3} \, A^{12}}{3^{\frac{25}{8}} \, \Gamma(\frac{1}{3})^3} \, e^{\frac{3\psi\,'(\frac{1}{3})-2\pi^2}{4\pi \sqrt{3}}}
\]
and
\[
S := \prod_{n\ge 1}
\frac{(n+\frac{1}{3})^{6n+1}}{n^{3n} \, (n+\frac{2}{3})^{3n+1}}
= \frac{3^{\frac{13}{24}} \, \Gamma^2(\frac{1}{3}) 
\, e^{\frac{\pi \sqrt{3}}{6} - \psi\,'(\frac{1}{3}) \frac{\sqrt{3}}{4 \pi}}}{\pi A^4}.
\]
\end{proposition}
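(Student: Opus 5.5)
We follow the proof of Proposition~\ref{pro:campbell48}: rewrite each product, after multiplying by a suitable convergent rational product, in the form treated by Corollary~\ref{cor:applications} with $q=3$.

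\textbf{The product $S$.} Scaling each factor by $3$ gives
\[
\frac{(3n+1)^{6n+1}}{(3n)^{3n}(3n+2)^{3n+1}},
\]
where the powers of $3$ cancel because $6n+1=3n+3n+1$. Since $3n+1=3(n+\frac13)$, this factor can be regrouped as
\[
\left(\frac{3n+1}{3n}\right)^{3n}\left(\frac{3n+1}{3n+2}\right)^{3n+1}.
\]
This is exactly the general factor of Corollary~\ref{cor:applications} with $q=3$ and
\[
\theta_0=\theta_3=1,\qquad \theta_1=-1,\qquad \theta_2=0,
\]
so $\sum\theta_j=0$. The corollary then gives $S$ as $e^{-1}(1/2)^{-1}$ times a product of $K(\frac j3)$, $K(\frac43)$, $\Gamma(\frac23)$, $\Gamma(\frac43)$, with no correcting rational product needed. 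We would then simplify in three moves:
\begin{itemize}
\item use $K(\frac43)=3^{-1/3}K(\frac13)$ from \eqref{eq:K+1} and $\Gamma(\frac43)=\frac13\Gamma(\frac13)$;
\item use the reflection formula \eqref{eq:formula for prod of gammas} to write $\Gamma(\frac23)=2\pi/(\sqrt3\,\Gamma(\frac13))$;
\item substitute $K(\frac13)$ from Table~\ref{tab:K-special-values}, together with $K(\frac23)=A^{8/3}3^{-1/36}e^{-2/9}/K(\frac13)$ from Lemma~\ref{lem:relation-K-A} with $n=3$.
\end{itemize}
This expresses $S$ in terms of $A$, $\Gamma(\frac13)$, $\pi$, powers of $3$, and $e^{\psi'(1/3)/(12\pi\sqrt3)}$, $e^{\pi/(18\sqrt3)}$.

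\textbf{The product $R$.} The exponents $9n+1$ and $n$-degree $1$ on $(n+\frac23)$ do not balance, so we first rewrite $R$ as a convergent Corollary-type product times a Lemma~\ref{lem:gammapro} product. Scaling by $3$, the total exponent is $6n+3n+2-1-9n-1=0$, so the powers of $3$ cancel again. We would then look for $\theta_0,\theta_1,\theta_2$, with $\sum\theta_j=0$, such that
\[
\prod_{j=0}^{2}\left(\frac{3n+j+1}{3n+j}\right)^{\theta_j(3n+j)}
\]
matches the factor of $R$ up to a factor $\prod (3n+c_i)^{\pm1}$ of degree zero. Matching the $n$-linear parts of the exponents on $3n$, $3n+1$, $3n+2$, $3n+3$ gives linear conditions on the $\theta_j$. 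The $3n$-coefficients of these exponents are
\begin{itemize}
\item $3n$: $6n$,
\item $3n+1$: $-9n$,
\item $3n+2$: $0\cdot n$,
\item $3n+3$: $3n$ (this is $(n+1)$).
\end{itemize}
The choice $\theta_0=-2$, $\theta_1=1$, $\theta_2=1$ fits, since it contributes $-\theta_0$ on $3n$, $\theta_0-\theta_1$ on $3n+1$, $\theta_1-\theta_2$ on $3n+2$, and $\theta_2$ on $3n+3$. This is the same pattern as Gosper's $P$: indeed $R$ should essentially be $P^{-3}$ times simple factors.

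The constant parts of the exponents then leave a residual factor like $\prod(3n+3)^{a}(3n+2)^{b}(3n+1)^{c}(3n)^{d}$ with $a+b+c+d=0$. Once the linear parts are matched, this convergence condition is automatic. We evaluate this residual factor by Lemma~\ref{lem:gammapro}, obtaining Gamma values at $\frac13,\frac23,1$. Corollary~\ref{cor:applications} then supplies $K(\frac13)$, $K(\frac23)$, $K(\frac43)$, $\Gamma(\frac23)$, $\Gamma(\frac43)$, and we simplify exactly as for $S$.

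\textbf{Main obstacle.} The difficulty is bookkeeping rather than conceptual. One must track:
\begin{itemize}
\item the boundary factor $e^{-\theta_0}\prod_{n=1}^{q-1}(n/(n+1))^{n\theta_n}$;
\item the shift $\theta_0=\theta_q$;
\item the residual rational product's exponents;
\item the conversion of $K(\frac23)$ via Lemma~\ref{lem:relation-K-A}, which produces the $3^{-25/8}$ and $\psi'(\frac13)$ exponents.
\end{itemize}
As a safeguard, we would numerically check both sides at each stage, as the paper notes \texttt{Mathematica} does directly.
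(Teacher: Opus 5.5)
Your proposal follows the paper's proof: Corollary~\ref{cor:applications} with $q=3$, taking $(\theta_0,\theta_1,\theta_2)=(1,-1,0)$ for $S$ and $(-2,1,1)$ for $R$, then \eqref{eq:K+1}, the reflection formula, Lemma~\ref{lem:relation-K-A} (for $S$) and the value of $K(\frac13)$ from Table~\ref{tab:K-special-values}. One correction for $R$: no residual Lemma~\ref{lem:gammapro} product arises, because the Corollary's factor with $\theta=(-2,1,1)$ already equals $(3n)^{6n}(3n+3)^{3n+2}(3n+2)^{-1}(3n+1)^{-9n-1}$ exactly; also $K(\frac23)$ cancels there, so the powers of $3$ and the $\psi'(\frac13)$ term come from $K(\frac13)^9$, the prefactor $\frac{2e^2}{9}$ and the reflection formula, not from Lemma~\ref{lem:relation-K-A}.
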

\begin{proof}
First, we write
\[
R = \prod_{n\ge 1}
\frac{(3n)^{6n} \, (3n+3)^{3n+2}}{(3n+2) \, (3n+1)^{9n+1}}
= \prod_{n\ge 1} \left( \frac{3n}{3n+1} \right)^{6n} \left( \frac{3n+2}{3n+1} \right)^{3n+1} \left( \frac{3n+3}{3n+2} \right)^{3n+2}.
\]
Then, in Corollary~\ref{cor:applications}, take $q=3$ and define
the sequence $(\theta_n)_{n\ge 1}$ by $\theta_{3n} = -2$ for all 
$n\ge 1$, and $\theta_{3n+1} = \theta_{3n+2} = 1$ for all 
$n\ge 0$, and set $\theta_0=1$. This yields
\begin{align*}
R & = \prod_{n\ge 1} \left( \frac{3n+1}{3n} \right)^{-6n} 
\left( \frac{3n+2}{3n+1} \right)^{3n+1} 
\left( \frac{3n+3}{3n+2} \right)^{3n+2} 
\\
&=
\left(\frac{2e^2}{9}\right)
\left(\frac{K(\frac{1}{3})^3 \Gamma(\frac{2}{3})}{K(\frac{2}{3})^3 e^{-\frac{1}{3}}}\right)
\left(\frac{K(\frac{2}{3})^3 \Gamma(1)}{K(1)^3 e^{-\frac{2}{3}}}\right)
\left(\frac{K(1)^3 \Gamma(\frac{4}{3})}{K(\frac{4}{3})^3 e^{-1}} \right)^{-2} \\
&= \frac{2A^{12}}{3^{\frac{17}{8}}}
\frac{\Gamma(\frac{2}{3})}{\Gamma(\frac{1}{3})^2} e^{\frac{3\psi\,'(\frac{1}{3})-2\pi^2}{4\pi \sqrt{3}}}
= \frac{4 \pi \sqrt{3} A^{12}}{3^{\frac{25}{8}}\Gamma(\frac{1}{3})^3}
e^{\frac{3\psi\,'(\frac{1}{3})-2\pi^2}{4\pi \sqrt{3}}}, 
\end{align*}
 where we have the 
value of $K\left( \frac{1}{3} \right)$ in Table \ref{tab:K-special-values}, Formula~\eqref{eq:K+1},
and the well-known Relations~\eqref{eq:Gamma+1} 
and~\eqref{eq:formula for prod of gammas}. 

\bigskip

Next, we write
\[
S = \prod_{n\ge 1} 
\frac{(n+\frac{1}{3})^{6n+1}}{n^{3n}(n+\frac{2}{3})^{3n+1}} =
\prod_{n\ge 1} \frac{(3n+1)^{6n+1}}{(3n)^{3n}(3n+2)^{3n+1}}.
\]
Then, in Corollary~\ref{cor:applications}, take $q=3$ and the sequence 
$(\theta_n)_{n\ge 1}$ defined by $\theta_{3n} = 1$ for all 
$n\ge 1$, $\theta_{3n+1} = -1$, and $\theta_{3n+2} = 0$ 
for all $n\ge 0$, and set $\theta_0=1$. We obtain
\begin{align*}
S & = \prod_{n\ge 1}\left(\frac{3n+1}{3n}\right)^{3n}
\left(\frac{3n+2}{3n+1}\right)^{-(3n+1)} 
= \left(\frac{2}{e}\right)
\left(\frac{K(\frac{1}{3})^3 \Gamma(\frac{2}{3})}{K(\frac{2}{3})^3 e^{-\frac{1}{3}}}\right)^{-1}
\left(\frac{K(1)^3\Gamma(\frac{4}{3})}{K\left(\frac{4}{3}\right)^3 e^{-1}}\right) \\
&= 2 \ e^{-\frac{1}{3}}\left(\frac{K\left(\frac{2}{3}\right)^3 \Gamma(\frac{4}{3})}{K\left(\frac{1}{3}\right)^3 K\left(\frac{4}{3}\right)^3 \Gamma\left(\frac{2}{3}\right)}\right) 
= 2 \ e^{-\frac{1}{3}} \left(\frac{K\left(\frac{2}{3}\right)^3 \Gamma\left(\frac{1}{3}\right)} {K\left(\frac{1}{3}\right)^6\Gamma\left(\frac{2}{3}\right)} \right),
\end{align*}
where we have used Equations~\eqref{eq:Gamma+1} and~\eqref{eq:K+1}.
But, taking $n=3$ in Lemma~\ref{lem:relation-K-A}, we have
\[
K\left(\frac{1}{3}\right)K\left(\frac{2}{3}\right) =
\frac{A^{\frac{8}{3}}}{3^{\frac{1}{36}}e^{\frac{2}{9}}}
\ \ \ \text{hence} \ \ \
K\left(\frac{2}{3}\right)^3 = \frac{A^8}{3^{\frac{1}{12}}
e^{\frac{2}{3}}K\left(\frac{1}{3}\right)^3}.
\]
Furthermore, with Equation~\eqref{eq:formula for prod of gammas}, we have
\[
\Gamma\left(\frac{1}{3}\right) \Gamma\left(\frac{2}{3}\right)
= \frac{\pi}{\sin(\frac{\pi}{3})} = \frac{2\pi}{\sqrt{3}}.
\]
This yields
\[
S = \frac{\sqrt{3}}{e\pi} 
\left(\frac{A^8 \Gamma^2(\frac{1}{3})}{3^{\frac{1}{12}} K(\frac{1}{3})^9}\right)
= \frac{3^{\frac{13}{24}} \Gamma^2(\frac{1}{3}) e^{\frac{\pi \sqrt{3}}{6} - \psi\,'(\frac{1}{3}) \frac{\sqrt{3}}{4 \pi}}}{\pi A^4},
\]
 where we have used the value of $K\left(\frac{1}{3}\right)$
given in Table \ref{tab:K-special-values}.
\end{proof}

\subsection{Gosper's product via Sondow-Hadjicostas' results}\label{Gosper-Sondow-Hadjicostas}
  t is possible to prove the value of Gosper's product by using
the results of Sondow-Hadjicostas. We give two approaches, one takes
advantage of the results of the previous section, the other one is
more directly deduced from \cite{SondowHadjicostas2007}.

\subsubsection*{First approach}

As in Proposition~\ref{almost-Gosper}, we have seen that
\begin{align*}
R &:= \prod_{n\ge 1} \frac{n^{6n}(n+1)^{3n+2}}{(n+\frac{2}{3})(n+\frac{1}{3})^{9n+1}}
= \prod_{n\ge 1}
\frac{(3n)^{6n}(3n+3)^{3n+2}}{(3n+2)(3n+1)^{9n+1}} \\
&= \frac{4 \pi \sqrt{3} A^{12}}{3^{\frac{25}{8}}\Gamma(\frac{1}{3})^3}
e^{\frac{3\psi\,'(\frac{1}{3})-2\pi^2}{4\pi \sqrt{3}}}, 
\end{align*}
but, in Remark~\ref{eq: special value for 1, a half and 1}, we have
seen the equality
$$
T := \prod_{n \geq 1} \frac{1}{e}
\left(\frac{1}{n}+1\right)^{n + \frac{1}{2}}
= 
\prod_{n \geq 1} \frac{1}{e} \left(\frac{3n+3}{3n}\right)^{n + \frac{1}{2}}
= \frac{e}{\sqrt{2 \pi}}
$$
(of which we have given a quite elementary proof).

\medskip

Now compute $\dfrac{T^3}{R}$:
\begin{align*}
\frac{T^3}{R} &= \prod_{n \geq 1} \frac{1}{e^3}
\left(\frac{3n+3}{3n}\right)^{3n + \frac{3}{2}} \,
\left(\frac{(3n+2)(3n+1)^{9n+1}}{(3n)^{6n}(3n+3)^{3n+2}}\right) \\
&= \prod_{n \geq 1} \frac{1}{e^3} 
\left(\frac{3n+1}{3n}\right)^{9n + \frac{3}{2}}
\left(\frac{(3n+2)^2}{(3n+3)(3n+1)}\right)^{\frac{1}{2}} \\
&= \prod_{n \geq 1} \frac{1}{e^3} 
\left(\frac{3n+1}{3n}\right)^{9n + \frac{3}{2}} \,
\left(\prod_{n \geq 1} \frac{(3n+2)^2}{(3n+3)(3n+1)}\right)^{\frac{1}{2}}. 
\end{align*}
Now we see that the first product is the cube of the Gosper constant,
while the second, by using Lemma~\ref{lem:gammapro} and the reflection
formula~(\ref{eq:formula for prod of gammas}), is equal to
the square root of
$$
\frac{\Gamma(2) \Gamma\left(\frac{4}{3}\right)}
{\Gamma\left(\frac{5}{3}\right)^2} = 
\frac{3\Gamma\left(\frac{1}{3}\right)}{4\Gamma\left(\frac{2}{3}\right)^2} 
= \frac{9}{16\pi^2} \, \Gamma\left(\frac{1}{3}\right)^3.
$$
This implies
$$
\prod_{n \geq 1} \frac{1}{e^3} 
\left(\frac{3n+1}{3n}\right)^{9n + \frac{3}{2}} =
e^3\frac{\sqrt{2\pi}\, T^3}{3\, \Gamma\left(\frac{1}{3}\right)^{\frac{3}{2}}\, R} 
$$
so that, after substituting the value of $R$, we obtain Gosper's Equation~(\ref{Gosper}).

\subsubsection*{Second approach}

We still let $P$ denote Gosper's product~\eqref{Gosper}. 
Define $b_n = -1 + \displaystyle\left(n + \frac{1}{2}\right) 
\log \frac{n+1}{n}$ for all $n\ge 1$.
Then $\log P = \displaystyle\sum_{n \geq 1} b_{3n}$.

 \ 


 \ 

Let $\omega := e^{\frac{2i\pi}{3}}$ and let, 
following\footnote{Note that the function $f_1$ 
introduced at about the same time in 
\cite[Eq.~(6)]{Pilehrood-Pilehrood2007} 
satisfies $f_1(z) =z\gamma(z)$. Also note that the notation
$\gamma(z)$ is justified by the value $\gamma(1) = \gamma$.}
\cite{SondowHadjicostas2007},
$$
\gamma(z) := 
\sum_{n \geq 1} z^{n-1} 
\left(\frac{1}{n} - \log \frac{n+1}{n} \right), 
$$
 for complex $z$ such that $|z| \leq 1$. 

We have
$$
\log P = \gamma(1) + \omega \gamma(\omega) + \omega^2 \gamma(\omega^2).
$$
Then it suffices to use the expressions of $\gamma(z)$ given in 
\cite{SondowHadjicostas2007} when $z$ is a root of unity.

\section{Higher-degree extensions}\label{sec:higherorder}
Gosper's product~\eqref{Gosper} leads us to consider 
extensions of the form 
\begin{equation}\label{commongen}
 \prod_{n\ge 1} e^{r(n, x)} \left( 1 + \frac{x}{p_1(n)} \right)^{p_2(n)} 
\end{equation}
for a complex variable $x$ and for a rational function $r(n, x)$ 
in $n$ and $x$ and for univariate polynomials $p_1(n)$ and $p_2(n)$
of equal degree. Since $ p_1(n)$ and $p_2(n)$ are both affine
polynomials for the case of Gosper's product, this raises questions 
as to extensions and analogues involving 
quadratic polynomials. 

Let $\phi = \frac{1 + \sqrt{5}}{2}$ denote the golden 
ratio. The work of Bradley--Thrush \cite{BradleyThrush2025} 
concerning Ramanujan's product was largely motivated by the 
corrected version 
 \begin{equation}\label{Ramanujangolden}
 \prod_{n\ge 1} e^{-\frac{\log^2 \phi}{\pi^2}} 
 \left( 1 + \frac{\log^2 \phi}{n^2 \pi^2} \right)^{n^{2}} 
 = e^{\frac{ \log^2 \phi }{2\pi^2} - \frac{\zeta(3)}{10\pi^2}} 
 \end{equation}
 given by Bradley--Thrush of a purported 
 product evaluation from the second volume of Ramanujan's notebooks
 \cite[p.\ 365]{Ramanujan1957}. 
 The functional equations-based approach from Bradley--Thrush
 yielded new and Ramanujan-like infinite product evaluations such as 
 \begin{equation}\label{BTgolden}
 \prod_{n\ge 1} \sqrt{\frac{2\pi}{n}} \frac{ \left( n \phi^2 \right)^{n \phi^2} 
 e^{-n \phi^2 + \frac{1}{12 n \phi^2}} }{\phi \Gamma(n \phi^2)} = 
 \frac{e^{\frac{1}{12} \EMcst \left(3-\phi ^2\right)} \phi ^{-\frac{\phi ^2}{6}} A^{\phi ^2}}{\sqrt[4]{2 \pi }}, 
\end{equation}
but the general problem of evaluating products as in Equation~\eqref{commongen}, for a free variable $x$, was not considered. 

A recent paper by Allouche and Markov \cite{AlloucheMarkov2026}    introduced techniques and results related to the products in Equations~\eqref{Ramanujangolden} and \eqref{BTgolden} 
and to a remarkable identity due to Ramanujan, see~\cite[p.~364]{Ramanujan1957} or \cite[p.~488]{Berndt1998}, stating that, for $x > 0$,
\begin{multline}\label{Ramanujan}
 \prod_{n \geq 1} e^{-x^{2}} \left( 1 + \frac{x^2}{n^2} \right)^{n^{2}} = 
 \left( 1 - e^{-2\pi x} \right)^{-x^2} \cdot \\ 
 \exp\left( 
 \frac{x^2}{2} - \frac{\pi x^3}{3} + \frac{x}{\pi} \operatorname{Li}_{2}(e^{-2\pi x}) 
 + \frac{1}{2 \pi^2} \operatorname{Li}_{3}(e^{-2\pi x}) 
 - \frac{\zeta(3)}{2\pi^2} \right). 
\end{multline}

Another motivation for the study of higher degree generalizations
of Gosper's identity is the theory of multisines and multicosines
functions due to Kurokawa and Rovinski\u{\i} (see \cite{Kurokawa1991, 
Kurokawa1992, Rovinskii1991}). 
In particular, the triple sine and the triple cosine are given by 
(see, e.g., \cite{KurokawaWakayama2003}):
$$ {\mathcal S}_3(x) =
e^{\frac{x^2}{2}} \prod_{n \geq 1} \left(e^{x^2}\left(1 - \frac{x^2}{n^2}\right)^{n^2}\right),
$$
$$
{\mathcal C}_3(x) = \prod_{n \geq 1}
\left(e^{x^2} \left(1 - \dfrac{x^2}{\left(n- \frac{1}{2}\right)^2}\right)^{\left(n-\frac{1}{2}\right)^2}\right).
$$
Note that the triple cosine function can also be written as
\begin{align*}
{\mathcal C}_3(x) 
&= \displaystyle 
\prod_{n \geq 1} \left(e^{x^2} \left(1-\dfrac{4x^2}{(2n-1)^2}
\right)^{\frac{(2n-1)^2}{4}}\right) \\
&= \displaystyle 
\prod_{n \geq 1} \left(e^{4 x^2} \left(1 - \dfrac{4x^2}{(2n- 1)^2}\right)^{(2n-1)^2}\right)^{\frac{1}{4}},
\end{align*}
which can be seen as a bisection (only taking the odd indices) of
the product $({\mathcal S}_3(x) e^{-\frac{x^2}{2}})^{1/4}$ with 
$x$ replaced by $2x$.

\medskip

Below we will exemplify generalizations of the Ramanujan product 
identity and the Kurokawa--Rovinski\u{i} triple trigonometric 
functions, as well as multisections of such products, by analogy 
with our generalization of Gosper's product evaluation. 
Furthermore, we will see how recent results of the third author
\cite{Holcombe2026} enter this framework.

\subsection{Multisections of Ramanujan's product}
\label{subsectionMotivating}

Given an infinite series or infinite product evaluation, a natural 
way of refining such an evaluation is given by the evaluation of the
corresponding multisections as we did previously. This motivates our
bisection of Ramanujan's product highlighted in 
Theorem~\ref{theoremmotive} below.

\begin{theorem}\label{theoremmotive}
The relation 
 \begin{multline*}
 \prod_{n\ge 0} e^{-x^2} \left( 1 + \frac{x^2}{(2n+1)^2} \right)^{(2n+1)^2} = 
 \\ 
 (1+e^{-\pi x})^{-x^2} \cdot
 \exp\left( \frac{2 x \operatorname{Li}_2\left(-e^{-\pi x}\right)}{\pi}+\frac{2 \operatorname{Li}_3\left(-e^{-\pi x}\right)}{\pi ^2}-\frac{\pi x^3}{6}+\frac{3 \zeta
 (3)}{2 \pi ^2} \right). 
 \end{multline*}
holds for complex $x$ such that $\Re(x) > 0$. 
\end{theorem}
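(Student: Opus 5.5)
The plan is to reduce the bisection to Ramanujan's product~\eqref{Ramanujan} itself, by subtracting off the even-indexed terms. For $\Re(y)>0$, write
\[
R(y) := \prod_{n\ge 1} e^{-y^2}\left(1+\frac{y^2}{n^2}\right)^{n^2}.
\]
Since $\log\bigl(e^{-y^2}(1+y^2/n^2)^{n^2}\bigr) = -\frac{y^4}{2n^2}+O(n^{-4})$, the product converges absolutely. We may therefore split it into its odd-indexed and even-indexed parts, and the odd part is the product in the statement of Theorem~\ref{theoremmotive}.

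The even-indexed factors reduce to $R$ at half the argument:
\[
e^{-x^2}\left(1+\frac{x^2}{4n^2}\right)^{4n^2} = \left(e^{-(x/2)^2}\left(1+\frac{(x/2)^2}{n^2}\right)^{n^2}\right)^{4}.
\]
Hence the even part equals $R(x/2)^4$, and the product to evaluate is $R(x)/R(x/2)^4$.

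Next, for real $x>0$, I will insert~\eqref{Ramanujan} for both $R(x)$ and $R(x/2)$, and set $q:=e^{-\pi x}$. The terms combine as follows.
\begin{itemize}
\item The algebraic prefactors give $(1-q^2)^{-x^2}(1-q)^{x^2}=(1+q)^{-x^2}$.
\item The quadratic terms cancel: $\frac{x^2}{2}-4\cdot\frac{x^2}{8}=0$.
\item The cubic terms give $-\frac{\pi x^3}{3}+4\cdot\frac{\pi x^3}{24}=-\frac{\pi x^3}{6}$.
\item The zeta terms give $-\frac{\zeta(3)}{2\pi^2}+\frac{4\zeta(3)}{2\pi^2}=\frac{3\zeta(3)}{2\pi^2}$.
\end{itemize}
For the polylogarithm terms I will use the duplication formulas $\operatorname{Li}_2(q^2)=2\bigl(\operatorname{Li}_2(q)+\operatorname{Li}_2(-q)\bigr)$ and $\operatorname{Li}_3(q^2)=4\bigl(\operatorname{Li}_3(q)+\operatorname{Li}_3(-q)\bigr)$. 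These follow from $\operatorname{Li}_s(z)+\operatorname{Li}_s(-z)=2^{1-s}\operatorname{Li}_s(z^2)$ by comparing power series. With them,
\[
\frac{x}{\pi}\operatorname{Li}_2(q^2)-4\cdot\frac{x}{2\pi}\operatorname{Li}_2(q)=\frac{2x}{\pi}\operatorname{Li}_2(-q),
\qquad
\frac{1}{2\pi^2}\operatorname{Li}_3(q^2)-\frac{4}{2\pi^2}\operatorname{Li}_3(q)=\frac{2}{\pi^2}\operatorname{Li}_3(-q).
\]
This yields exactly the stated right-hand side for real $x>0$.

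Finally, I extend the identity to complex $x$ with $\Re(x)>0$ by analytic continuation. Interpret each power as $\exp\bigl((2n+1)^2\log(1+x^2/(2n+1)^2)\bigr)$ with the principal branch, and $(1+e^{-\pi x})^{-x^2}$ as $\exp(-x^2\log(1+e^{-\pi x}))$. The latter is holomorphic on $\Re(x)>0$ because $|e^{-\pi x}|<1$ there. The same bound $|e^{-\pi x}|<1$ makes $\operatorname{Li}_2(-e^{-\pi x})$ and $\operatorname{Li}_3(-e^{-\pi x})$ holomorphic, since their power series converge.

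The step I expect to need the most care is the left-hand side: showing that the sum of logarithms converges locally uniformly on $\Re(x)>0$ to a holomorphic function, so that the identity theorem applies. This works away from the points where $1+x^2/(2n+1)^2$ crosses the branch cut. Since $x^2=-(2n+1)^2$ forces $x=\pm i(2n+1)$, which lies off the half-plane, I will argue on compact subsets using the uniform tail bound $O(|x|^4/n^2)$. Because the statement only needs the exponentiated identity, I will also check that the branch ambiguities are harmless after exponentiation. Once these points are settled, the real case proved above extends to the whole half-plane.
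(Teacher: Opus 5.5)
Your proposal is correct and follows the paper's proof essentially verbatim. Like the paper, you write the odd-indexed product as $\mathcal{S}^{\rm Ram}(x)/\mathcal{S}^{\rm Ram}(x/2)^4$ and combine Ramanujan's evaluation~\eqref{Ramanujan} with the duplication formula~\eqref{eqn:polylog} for $s\in\{2,3\}$, and your coefficient bookkeeping all checks out. Your extension from $x>0$ to $\Re(x)>0$, which the paper leaves implicit, is simpler than you fear: the exponents $(2n+1)^2$ are integers, so each left-hand factor is entire and the product converges locally uniformly on all of $\mathbb{C}$, and no branch issue arises on that side.
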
 

\begin{proof}

It is possible to prove this theorem by mimicking 
the proof for the evaluation of the Ramanujan's Equation~\eqref{Ramanujan} given in
\cite[p.~488]{Berndt1998}, but we will give a shorter proof 
using Ramanujan's evaluation. Note that a proof for a 
bisection is ``easier'' than for a multisection.

\medskip

\noindent
Define, as in \cite{AlloucheMarkov2026},
$$
{\mathcal S}^{\rm Ram}(x) := \
\prod_{n \geq 1} e^{-x^{2}} \left( 1 + \frac{x^2}{n^2} 
\right)^{n^{2}}.
$$
Then
$$
\left({\mathcal S}^{\rm Ram}\left(\frac{x}{2}\right)\right)^4 = \
\prod_{n \geq 1} e^{-x^{2}} \left( 1 + \frac{x^2}{4 n^2} 
\right)^{4 n^{2}} = \prod_{\substack{n \geq 1 \\ n \ \text{even}}}
e^{-x^{2}} \left( 1 + \frac{x^2}{n^2} \right)^{n^{2}}
$$
so that
$$
\frac{{\mathcal S}^{\rm Ram}(x)}
{\left({\mathcal S}^{\rm Ram}\left(\frac{x}{2}\right)\right)^4}
= \prod\limits_{\substack{n \geq 1 \\ n \ \text{odd}}}
e^{-x^{2}} \left( 1 + \frac{x^2}{n^2} \right)^{n^{2}} =
\prod_{n \geq 0} e^{-x^2} \ \left( 1 + \frac{x^2}{(2n+1)^2} \right)^{(2n+1)^2}.
$$
Now, consider the polylogarithm function $\operatorname{Li}_{s}(z) = \sum_{n\ge 1} \frac{z^n}{n^s}$ defined for $s$ a positive integer
and $|z| <1$ (the function $z \mapsto \operatorname{Li}_{s}(z)$ can be 
extended to the whole complex plane by analytic continuation). The 
polylogarithm satisfies the functional equation
\begin{equation}
\label{eqn:polylog}
\displaystyle \operatorname{Li} _{s}(-z) +
\operatorname{Li} _{s}(z)=2^{1-s}\operatorname {Li} _{s}(z^{2});
\end{equation}
see~\cite{Lewin1981} for a classical reference.
Using Equation~\eqref{Ramanujan} and the cases $s \in\{2, 3\}$ of Equation~\eqref{eqn:polylog}, we obtain that 
\begin{multline*}
 \prod\limits_{\substack{n \geq 1 \\ n \ \text{odd}}} e^{-x^2} \left( 1 + \frac{x^2}{(2n+1)^2} \right)^{(2n+1)^2} = 
 \\
 (1+e^{-\pi x})^{-x^2}
 \exp\left( \frac{2 x \operatorname{Li}_2\left(-e^{-\pi x}\right)}{\pi}+\frac{2 \operatorname{Li}_3\left(-e^{-\pi x}\right)}{\pi ^2}-\frac{\pi x^3}{6} + \frac{3 \zeta
 (3)}{2 \pi ^2} \right), 
\end{multline*}
 as desired. 
\end{proof}
 
\subsection{A cubic extension}\label{sec:cubic}

The same techniques allow us to evaluate 
$$ 
\prod _{n\ge 1} e^{-x^2} \left(1+\frac{x^2}{n^3}\right)^{n^3} 
$$
as the product of 
\begin{multline*}
 2 \sqrt{2} \pi ^{3/2} A^9 \Gamma^{x^2} \left(x^{2/3}+1\right) 
 \cdot \\ 
 \Gamma^{x^2} \left(1-\frac{i \left(-i+\sqrt{3}\right) x^{2/3}}{2} \right) \Gamma^{x^2}
 \left( 1 + \frac{i \left(i+\sqrt{3}\right) x^{2/3}}{2} \right) 
\end{multline*}
 and 

 \ 

\noindent $ \operatorname{exp}\Big( -6 \psi ^{(-4)}\left(x^{2/3}+1\right)-6 \psi ^{(-4)}\left(1-\sqrt[3]{-1} x^{2/3}\right) - 
 6 \psi ^{(-4)}\big((-1)^{2/3}
 x^{2/3} + 1\big) + $

\noindent $ \frac{3}{2} x^{2/3} \Big(4 \psi ^{(-3)}\left(x^{2/3}+1\right)+\left(-2-2 i \sqrt{3}\right) \psi ^{(-3)}\left(1-\sqrt[3]{-1}
 x^{2/3}\right) + 2 i \big(\sqrt{3}+ $
 
\noindent $ i\big) \psi ^{(-3)}\left((-1)^{2/3} x^{2/3}+1\right) + 
 x^{2/3} \Big(-2 \psi
 ^{(-2)}\big(x^{2/3} + 1\big) + 
 \left(1-i \sqrt{3}\right) \psi ^{(-2)}\big(1 - $
 
\noindent $ \sqrt[3]{-1} x^{2/3}\big) + 
 \big(1 + i \sqrt{3}\big) \psi
 ^{(-2)}\big((-1)^{2/3} x^{2/3} + 1\big)\Big)\Big)+\frac{9 \zeta (3)}{4 \pi ^2} \Big). $

\section{Further extensions of higher degree: the Wallis hierarchy}\label{sec:Wallis-hierarchy}

Looking at the example given in Section~\ref{sec:cubic}, one might want to
look at products of the form 
\begin{equation*}\label{higher-commongen}
 \prod_{n\ge 1} e^{r(n, x)} \left( 1 + \frac{x}{p_1(n)} \right)^{p_2(n)} 
\end{equation*}
as in Section~\ref{sec:higherorder}, but where the polynomials $p_1$ 
and $p_2$ do not necessarily have the same degree, and the quantity 
$r(n, x)$ is chosen as the ``minimal'' counterterm making the infinite 
product converge.

\bigskip

A particularly nice case is given in \cite{Holcombe2026}, where
the infinite product $G_m(x)$ is defined by
\[
G_m(x) := \prod_{n \geq 2} \exp\left(\sum_{1 \leq j \leq \lfloor\frac{m+1}{2}\rfloor} \frac{x^{2j}n^{m-2j}}{j}\right) 
\left(1 - \frac{x^2}{n^2}\right)^{n^m}
\]
and $Q_m := G_m(1)$ can be seen a generalized Wallis formula. 
Namely, the (inverse of the) classical Wallis formula reads
$$
\frac{2}{\pi} = \prod_{n \geq 1}\left(1 - \frac{1}{4n^2}\right).
$$
This product only involves the even numbers: the corresponding 
product with {\it all}\, the integers $\geq 2$ is 
$$
\prod_{n\ge 2}\left(1-\frac1{n^2}\right) = G_0(1).
$$
The third author has studied a natural generalization of this product,
that he has called the {\it Wallis hierarchy}: the first few values of 
these products are (see \cite{Holcombe2026}):
\[
\begin{array}{lll}
Q_0 &= \displaystyle
\prod_{n\ge 2}\left(1-\frac1{n^2}\right)
&= \ \ \dfrac12 \\
Q_1 &= \displaystyle
\prod_{n\ge 2} e^{\frac1n} \left(1-\frac1{n^2}\right)^n
&= \ \ \dfrac{e^{\EMcst}}{2} \\
Q_2 &= \displaystyle
\prod_{n\ge 2} e \left(1-\frac1{n^2}\right)^{n^2}
&= \ \dfrac{\pi}{e^{3/2}} \\
Q_3 &= \displaystyle
\prod_{n\ge 2} e^{n+\frac{1}{2n}}\left(1-\frac1{n^2}\right)^{n^3}
&= \ \ \displaystyle
\sqrt{\frac{\pi}{2}}\,
\exp\left(\EMcst-\frac76-\frac{3\zeta'(2)}{\pi^2}
\right) \\
Q_4 &= \displaystyle
\prod_{n\ge 2} e^{n^2+\frac12}\left(1-\frac1{n^2}\right)^{n^4}
&= \ \ \displaystyle 
\pi \exp\left(-\frac74 + \frac{3\zeta(3)}{\pi^2}\right).
\end{array}
\]

\begin{remark}
The value of $Q_0$ is easy to obtain by telescopic simplification
(write $1 - \frac{1}{n^2} = (\frac{n-1}{n})(\frac{n+1}{n})$).
Also note that the value of $Q_2$ was first given in 
\cite{Holcombe2013}.
\end{remark}

\bigskip

  To obtain formulas of this sort, we propose a slight modification of     the proof given in~\cite{Holcombe2026} by first taking the 
 logarithmic  derivative of the involved infinite products, as was done above. 
Namely, for \(|x|<2\),
\[
\begin{aligned}
\frac{d}{dx}\log G_m(x)
&=
\sum_{n\ge 2}
\left[
\sum_{j=1}^{\lfloor \frac{m+1}{2}\rfloor}2x^{2j-1}n^{m-2j}
-
\frac{2xn^m}{n^2-x^2}
\right] \\
&=
2\sum_{n\ge 2}
\left[
\sum_{j=1}^{\lfloor \frac{m+1}{2}\rfloor}x^{2j-1}n^{m-2j}
-
\sum_{j\ge 1} x^{2j-1}n^{m-2j}
\right] \\
&=
-2\sum_{j\ge \lfloor \frac{m+1}{2}\rfloor + 1} x^{2j-1}
\sum_{n\ge 2} n^{m-2j} \\
&=
-2\sum_{j\ge \lfloor \frac{m+1}{2}\rfloor + 1}
x^{2j-1}\bigl(\zeta(2j-m)-1\bigr).
\end{aligned}
\]
The counterterm has canceled exactly the non-summable 
part of the logarithmic expansion.
Since \(G_m(0)=1\), integration from \(0\) to \(x\) gives
\[
\log G_m(x)
=
-\sum_{j\ge \lfloor\frac{m+1}{2}\rfloor+1}
\frac{x^{2j}}{j}\bigl(\zeta(2j-m)-1\bigr).
\]
Putting \(x=1\) gives
\[
\log Q_m
=
\log G_m(1)
=
-\sum_{j\ge \lfloor\frac{m+1}{2}\rfloor+1}
\frac{\zeta(2j-m)-1}{j}.
\]

Now we distinguish two cases according to the parity of $m$.

\subsection*{Even branch}
 Let $m = 2a$, with \(a\geq 1\). Then \(\lfloor\frac{m+1}{2}\rfloor=a\), so that $$ \frac{d}{dx}\log G_{2a}(x) = -2x^{2a-1} \sum_{r \ge  
 1}\bigl(\zeta(2r)-1\bigr)x^{2r}. $$
Using (see, e.g., \cite[Eq.~18, p.~271]{SrivastavaChoi2012})
\[
\sum_{r\ge 1} \bigl(\zeta(2r)-1\bigr)x^{2r}
=
\frac{1-\pi x\cot(\pi x)}{2}
-
\frac{x^2}{1-x^2},
\]
one obtains
\[
\log Q_{2a}
=
-2\int_0^1
x^{2a-1}
\left[
\frac{1-\pi x\cot(\pi x)}{2}
-
\frac{x^2}{1-x^2}
\right]dx.
\]

 A routine application of integration by parts then gives that 
\begin{equation*}
\log Q_{2a}
=
\log\pi
-
H_a
-
\frac1{2a}
+
2a
\sum_{\ell=1}^{a-1}
(-1)^{\ell+1}
\frac{(2a-1)!}{(2a-2\ell)!}
\frac{\zeta(2\ell+1)}{(2\pi)^{2\ell}}
\end{equation*}
(where $H_a$ is the $a$-th harmonic number, 
$H_a = 1 + \frac{1}{2} + \ldots + \frac{1}{a}$).

\subsection*{Odd branch}

Let \(m=2a+1\), with \(a\geq0\). 
Then \(\lfloor\frac{m+1}{2}\rfloor=a+1\), so
\[
\frac{d}{dx}\log G_{2a+1}(x)
=
-2x^{2a+1}
\sum_{r\ge 1} \bigl(\zeta(2r+1)-1\bigr)x^{2r}.
\]
Using (see, e.g., \cite[Eq.~16, p.~271]{SrivastavaChoi2012}
\[
\sum_{r\ge 1} \bigl(\zeta(2r+1)-1\bigr)x^{2r}
=
-\frac12\left[\psi(1+x)+\psi(1-x)+2\gamma\right]
-
\frac{x^2}{1-x^2},
\]
one obtains
\[
\log Q_{2a+1}
=
\int_0^1
x^{2a+1}
\left[
\psi(1+x)+\psi(1-x)+2\gamma
+
\frac{2x^2}{1-x^2}
\right]dx.
\]
The apparent singularities at \(x=1\) cancel inside the bracket, so the integral is an ordinary convergent integral.

\bigskip

The rest of the proof is the same as in \cite{Holcombe2026}.

\bigskip

\begin{remark}
 Though the equalities giving $Q_1, \cdots, Q_4, \cdots$ are generalizations of the equality giving $Q_0$, one might find that the ``true'' 
 Wallis product is not $Q_0$, but rather $\displaystyle\frac{2}{\pi} = \prod_{n \geq 1}\left(1 - \frac{1}{4n^2}\right)$. Hence one might want 
 to consider generalizations of this product
where the quantity $(1 - \frac{1}{n^2})^{n^m}$ is replaced with
$(1 - \frac{1}{4n^2})^{n^m}$ and the exponential term adequately 
adjusted. This can indeed be done by looking at $G_m(\frac{1}{2})$
instead of $G_m(1)$. We leave this to the reader.
\end{remark}

\section{A Dirichlet series-based approach}
\label{sec:Dirichlet-approach}

 Another approach to proving Gosper's equality (and related formulas) is to use Dirichlet series. Below, we develop this method 
 for Gosper's equality.
Let $P$ be Gosper's product as in Equation~\eqref{Gosper}.
 Since $$ -1+\left(3n+\frac12\right)\log\left(1+\frac{1}{3n}\right) = 
 \frac{1}{108n^2}-\frac{1}{324n^3}+O(n^{-4}), $$ the infinite product shown in Equation~\eqref{Gosper} converges, with 
	\[
	\log P=\lim_{N\to +\infty}S_N,
	\qquad
	S_N:=\sum_{n=1}^N
	\left\{
	-1+\left(3n+\frac12\right)
	\log\left(1+\frac{1}{3n}\right)
	\right\}.
	\]
	For \(a>0\), put
	\[
	W_N(a):=\sum_{k=0}^{N-1}(k+a)\log(k+a),
	\qquad
	L_N(a):=\sum_{k=0}^{N-1}\log(k+a),
	\]
	and
	\[
	Z_N(s,a):=\sum_{k=0}^{N-1}(k+a)^{-s}.
	\]
	From
	\[
	Z_N(s,a)=\zeta(s,a)-\zeta(s,N+a)
	\]
	it follows that
	\[
	W_N(a)=-\zeta'(-1,a)+\zeta'(-1,N+a),
	\qquad
	L_N(a)=-\zeta'(0,a)+\zeta'(0,N+a).
	\]
	Euler--Maclaurin gives, as \(q\to +\infty\),
	\begin{align*}
		\zeta'(-1,q)
		&=
		\frac12q^2\log q-\frac14q^2-\frac12q\log q
		+\frac1{12}\log q+\frac1{12}
		+\frac{1}{720q^2}+O(q^{-4}), \\
		\zeta'(0,q)
		&=
		\left(q-\frac12\right)\log q-q
		+\frac{1}{12q}-\frac{1}{360q^3}+O(q^{-5}).
	\end{align*}
	Writing
	\[
	B_2(a):=a^2-a+\frac16
	\]
	and expanding at \(q=N+a\) gives
	\begin{align*}
		W_N(a)
		&=
		\frac12N^2\log N-\frac14N^2
		+\left(a-\frac12\right)N\log N
		+\frac12B_2(a)\log N \\
		&\qquad
		-\zeta'(-1,a)+\frac12B_2(a)
		+\frac{a(2a^2-3a+1)}{12N}
		+O(N^{-2}), \\
		L_N(a)
		&=
		N\log N-N+\left(a-\frac12\right)\log N
		-\zeta'(0,a)+\frac{B_2(a)}{2N}
		+O(N^{-2}).
	\end{align*}
	Now
	\[
	\log\left(1+\frac{1}{3n}\right)
	=\log\left(n+\frac13\right)-\log n,
	\qquad
	3n+\frac12=3\left(n+\frac13\right)-\frac12.
	\]
	After replacing \(n\) by \(k+1\),
	\[
	S_N
	=-N+3W_N\left(\frac43\right)-3W_N(1)
	-\frac12L_N\left(\frac43\right)-\frac12L_N(1).
	\]
	Since \(B_2(4/3)=11/18\) and \(B_2(1)=1/6\), the preceding
	expansions give
	\begin{align*}
		3\left(W_N\left(\frac43\right)-W_N(1)\right)
		&=
		N\log N+\frac23\log N
		-3\zeta'\left(-1,\frac43\right)+3\zeta'(-1) \\
		&\qquad
		+\frac23+\frac{5}{27N}+O(N^{-2}), \\
		-\frac12\left(L_N\left(\frac43\right)+L_N(1)\right)
		&=
		-N\log N+N-\frac23\log N \\
		&\qquad
		+\frac12\zeta'\left(0,\frac43\right)
		+\frac12\zeta'(0)
		-\frac{7}{36N}+O(N^{-2}).
	\end{align*}
	Hence
	\[
	\begin{aligned}
		S_N
		&=
		-3\zeta'\left(-1,\frac43\right)+3\zeta'(-1)
		+\frac12\zeta'\left(0,\frac43\right)
		+\frac12\zeta'(0) \\
		&\qquad
		+\frac23-\frac{1}{108N}+O(N^{-2}),
	\end{aligned}
	\]
	and therefore
	\[
	\log P
	=-3\zeta'\left(-1,\frac43\right)+3\zeta'(-1)
	+\frac12\zeta'\left(0,\frac43\right)
	+\frac12\zeta'(0)+\frac23.
	\]
	Differentiating the shift relation $\zeta(s,a+1)=\zeta(s,a)-a^{-s}$ with 
	with respect to \(s\) gives $	\zeta'(s,a+1)=\zeta'(s,a)+a^{-s}\log a$.
	In particular,
	\[
	\zeta'\left(-1,\frac43\right)
	=\zeta'\left(-1,\frac13\right)-\frac13\log 3,
	\qquad
	\zeta'\left(0,\frac43\right)
	=\zeta'\left(0,\frac13\right)-\log 3.
	\]
	Thus
	\begin{equation}
    \label{eq:new-eqn-P}
		\log P=
		-3\zeta'\left(-1,\frac13\right)
		+\frac12\zeta'\left(0,\frac13\right)
		+3\zeta'(-1)+\frac12\zeta'(0) 
		+\frac12\log 3+\frac23.
		\end{equation}
Now, we have
\begin{align*}
\zeta(0) &= -1/2, \\
\zeta'(0) &= - \frac{1}{2} \log 2\pi, \\
\zeta'\left(-1,\frac13\right)
		&=
		\frac{\psi'(1/3)}{12\pi\sqrt3}
		-\frac13\zeta'(-1)
		-\frac{\pi}{18\sqrt3}
		-\frac{\log 3}{72}, \\
\zeta'\left(0,\frac13\right)
		&= \log\Gamma\left(\frac13\right)-\frac12\log(2\pi), \\
\zeta'(-1) &= \frac{1}{12} - \log A, 
\end{align*}
where the first two equalities are classical, where, for the third 
equality, we used \cite[Eq.~(18) in Proposition~3]{MillerAdamchik1998} 
or \cite[Section~1.2]{Brychkov2008}, and where $A$ is the Glaisher--Kinkelin constant defined in Section~\ref{definitions-lemmas}. 
Substituting these values in Equation~\eqref{eq:new-eqn-P} yields
	\[
	\log P
	=1+\frac12\log\left(\frac{\Gamma(1/3)}{2\pi}\right)
	+\frac{13}{24}\log 3-4\log A
	+\frac{2\pi^2-3\psi'(1/3)}{12\pi\sqrt3}.
	\]
	Consequently,
	\[
	\prod_{n\geq 1}\frac{1}{e}
	\left(1+\frac{1}{3n}\right)^{3n+\frac12}
	=
	\sqrt{\frac{\Gamma(\frac{1}{3})}{2\pi}} \ \frac{3^{\frac{13}{24}} 
 \ e^{1+\frac{2\pi^2 - 3\psi\,'(\frac{1}{3})}{12\pi\sqrt{3}}} }{A^4},
	\]
as desired.

\bigskip

To close this section about infinite products and Dirichlet series,
we would like to cite the paper \cite{Perkins-Van-Gorder} that studies
nice infinite products involving number theoretic functions, and where indices are either positive integers or prime numbers.

\section{Conclusion} While the infinite products studied in this paper are convergent products, one might want to study similar 
 ``zeta-regularized products'': such products have been alluded to in the introduction (where we have pointed to 
 \url{https://math.stackexchange.com/questions/4343366}). 
In particular, it might be fruitful to study possible links between
the present paper and a recent preprint of Hu and Kim \cite{HuKim2026}.


\subsection*{Acknowledgements}
Manon Stipulanti is an FNRS Research Associate supported by the Research grant 1.C.104.24F.

\subsection*{Note added on October 2, 2026}

We have found on the arXiv two preprints that are certainly
worth citing, namely J.~W.~E. Farrell's papers
\url{https://arxiv.org/abs/1906.00122} and
\url{https://arxiv.org/abs/2608.29741}.


\bibliographystyle{plainurl}
\bibliography{biblio.bib}

\end{document}